\theoremstyle{theorem}
\newtheorem{theorem}{Theorem}[section]
\newtheorem{corollary}[theorem]{Corollary}
\newtheorem{proposition}[theorem]{Proposition}
\newtheorem{lemma}[theorem]{Lemma}
\newtheorem*{theorem*}{Theorem}
\theoremstyle{definition}
\newtheorem{definition}[theorem]{Definition}
\newtheorem{remark}[theorem]{Remark}
\newcommand{\R}{{\mathbb R}}
\newcommand{\C}{{\mathbb C}}
\newcommand{\f}{\varphi}
\newcommand{\Ad}{{\rm Ad}}
\newcommand{\U}{{\rm U}}
\newcommand{\A}{{\rm A}}
\newcommand{\G}{{\rm G}}
\newcommand{\K}{{\rm K}}
\newcommand{\Fg}{{\rm F}}
\newcommand{\Hg}{{\rm H}}
\newcommand{\dt}{\frac{d}{dt}}
\newcommand{\dtz}{\left.\dt\right|_{t=0}}
\newcommand{\fra}{\mathfrak{a}}
\newcommand{\franp}{\mathfrak{a_\nu^\perp}}
\newcommand{\frg}{\mathfrak{g}}
\newcommand{\frh}{\mathfrak{h}}
\newcommand{\frk}{\mathfrak{k}}
\newcommand{\frp}{\mathfrak{p}}
\newcommand{\fru}{\mathfrak{u}}
\newcommand{\MM}{\mathscr{M}(M)}
\newcommand{\CM}{\mathcal{C}(M)}
\newcommand{\PM}{\mathscr{P}(M)}
\newcommand{\PR}{\mathscr{P}}
\newcommand{\F}{\mathfrak{F}}
\newcommand{\Fa}{\mathfrak{F}_\fra}
\newcommand{\Os}{\mathcal{O}}
\newcommand{\st}{\ |\ }
\newcommand{\ts}{\overline{t}}
\newcommand{\Aff}{{\rm Af{}f}}
\numberwithin{equation}{section}
\title{Convexity theorems for the gradient map on probability measures}
\author{Leonardo Biliotti and Alberto Raffero}
\subjclass[2010]{53D20}
\keywords{gradient map, probability measures, convexity}
\thanks{The authors were partially supported by FIRB  ``Geometria differenziale e teoria geometrica delle funzioni'' of MIUR, and by GNSAGA of INdAM.
The first author was also supported by PRIN ``Variet\`a reali e complesse: geometria, topologia e analisi armonica'' of MIUR}
\address{(Leonardo Biliotti) Dipartimento di Matematica e Informatica, Universit\`a degli Studi di Parma, Parco Area delle Scienze 53/A, 43124 Parma, Italy}
\email{leonardo.biliotti@unipr.it}
\address{(Alberto Raffero) Dipartimento di Matematica e Informatica ``U.~Dini'' \\ Universit\`a degli Studi di Firenze\\ Viale Morgagni 67/a\\ 50134 Firenze\\ Italy}
\email{alberto.raffero@unifi.it}
\begin{document}
\maketitle
\begin{abstract}
Given a K\"ahler manifold $(Z,J,\omega)$ and a compact real submanifold $M\subset Z$,
we study the properties of the gradient map
associated with the action of a noncompact real reductive Lie group $\G$
on the space of probability measures on $M.$ In particular, we prove convexity results for such map when $\G$ is Abelian
and we investigate how to extend them to the non-Abelian case.
\end{abstract}

\section{Introduction}
Let $(Z,J,\omega)$ be a compact connected K\"ahler manifold and let $\U$ be a compact connected Lie group with Lie algebra $\fru$.
Assume that $\U$ acts on $Z$ by holomorphic isometries and in a Hamiltonian fashion with momentum mapping $\mu:Z\rightarrow\fru^*$.
It is well-known that the $\U$-action extends to a holomorphic action of the complexification $\U^\C$ of $\U$.
Moreover, the latter gives rise to a continuous action of $\U^\C$ on the space of Borel probability measures on $Z$ endowed with the weak* topology.
We denote such space by $\mathscr{P}(Z)$.

Recently, the first author and Ghigi \cite{BilGhi} studied the properties of the $\U^\C$-action on $\mathscr{P}(Z)$ using momentum mapping techniques.
Although it is still not clear whether any reasonable symplectic structure on $\mathscr{P}(Z)$ may exist (but see \cite{GaKiPa} for something similar on the Euclidean space),
in this setting it is possible to define an analogue of the momentum mapping, namely
\[
\F : \mathscr{P}(Z) \rightarrow  \fru^*, \quad  \F (\nu)=\int_Z \mu(z) \mathrm{d} \nu (z).
\]
$\F$ is called {\em gradient map}. Using it, the usual concepts of stability appearing in K\"ahler geometry
\cite{GeoRobSal,Hei,HeiHuc,HeiHucLoo,HeiLoo,KemNes,Kir,MumFogKir,Schw,Sja}
can be defined for probability measures, too.

In \cite{BilGhi}, the authors were interested in determining the conditions for which the $\U^\C$-orbit of a given probability measure $\nu\in\mathscr{P}(Z)$ has
non-empty intersection with $\F^{-1}(0)$, whenever $0$ belongs to the convex hull of $\mu(Z)$.
This problem is motivated by an application to upper bounds for the first eigenvalue of the Laplacian acting on functions (see also \cite{ArGhLo, BilGhi2, BilGhi0, BouLiYau, Her}).
Furthermore, they obtained various stability criteria for measures.

Stability theory for the action of a compatible subgroup $\G$ of $\U^\C$ was analyzed by the first author and Zedda in \cite{BilZed}.

Recall that a closed subgroup $\G$ of $\U^\C$ is called {\em compatible} if the Cartan decomposition $\U^\C =\U\exp (i \fru)$
induces a Cartan decomposition $\G=\K\exp (\frp)$, where $\K\coloneqq\G\cap \U$ and $\frp\coloneqq \frg\cap i \fru$ is a $\K$-stable linear subspace of $i\fru$.

Identify $\fru^*$ with $\fru$ by means of an $\Ad(\U)$-invariant scalar product on $\fru$.
For each $z\in Z$, let $\mu_\frp(z)$ denote $-i$ times the component of $\mu(z)$ in the direction of $i\frp\subset\fru$. This defines a $\K$-equivariant map
$\mu_{\frp} :Z \longrightarrow \frp$, which is called $\G$-\emph{gradient map} associated with $\mu$ \cite{HeiSchu,HeiSchSto,HeiSto}.
Since $\U^\C$ acts holomorphically on $Z,$ the fundamental vector field $\beta_Z\in\mathfrak{X}(Z)$ of any $\beta\in\frp$ is the gradient of the function
$\mu_\frp^\beta(\cdot) := \langle\mu_\frp(\cdot),\beta\rangle$ with respect to the Riemannian metric $\omega(\cdot,J\cdot)$,
$\langle\cdot,\cdot\rangle$ being an $\Ad(\K)$-invariant scalar product on $\frp$.

If $M$ is a compact $\G$-stable real submanifold of $Z$, we can restrict $\mu_{\frp}$ to $M.$
Moreover, the $\G$-action on $M$ extends in a natural way to a continuous action on $\PM$,  and the map
\[
\F_\frp:\PM\rightarrow\frp,\quad  \F_\frp (\nu)=\int_M \mu_{\mathfrak p} (x) \mathrm d \nu (x),
\]
is the analogue of the $\G$-gradient map in this setting.
It is not hard to prove that its image coincides with the convex hull of $\mu_\frp(M)$ in $\frp$ (cf.~Lemma \ref{imgradconvhull}).

Fix a probability measure $\nu\in\PM$.
Having in mind the classical convexity results for the momentum mapping \cite{Ati,GuiSte,Kir2} and for the $\G$-gradient map \cite{HeiSchu, HeiSto2},
in this paper we are interested in studying the behaviour of $\F_\frp$ on the orbit $\G\cdot \nu$.

Let $\fra \subset \frp$ be an Abelian subalgebra of $\frg$. The Abelian Lie group $\A \coloneqq \exp (\fra )$ is compatible and the corresponding $\A$-gradient map
is given by $\mu_\fra \coloneqq \pi_\fra\circ\mu_\frp$, where $\pi_\fra$ is the orthogonal projection onto $\fra$.
In Section \ref{sectabres}, we prove a result which can be regarded as the analogue of a theorem by Atiyah \cite{Ati} in our setting (see also  \cite{HeiSto2}):
\begin{theorem*}
The image of the map $\F_{\fra}:\A \cdot \nu \rightarrow \fra$ is an open convex subset of an affine subspace of $\fra$ with direction $\fra_\nu^\perp$.
Moreover, $\F_\fra (\overline{\A \cdot \nu} )$ is the convex hull of $\F_\fra (\overline{\A \cdot \nu} \cap \PM^\A )$, where $\PM^\A$ is the set of $\A$-fixed measures.
\end{theorem*}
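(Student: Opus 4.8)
The plan is to realise the restriction of $\F_\fra$ to the orbit as the gradient of a convex function on $\fra$ and then to read off both assertions from convex analysis. For $x\in M$ I would set $\psi_x(\xi)=\int_0^1\langle\mu_\frp(\exp(s\xi)\cdot x),\xi\rangle\,\mathrm{d}s$ and $\Psi_\nu(\xi)=\int_M\psi_x(\xi)\,\mathrm{d}\nu(x)$, a finite smooth function on $\fra$. Since $\fra$ is Abelian we have $\exp(\xi+t\beta)=\exp(\xi)\exp(t\beta)$, and because $\beta_M$ is the gradient of $\mu_\frp^\beta$, a direct computation---using that the integrand equals $\frac{\mathrm{d}}{\mathrm{d}s}\big(s\,\mu_\frp^\beta(\exp(s\xi)\cdot x)\big)$---gives $\dtz\Psi_\nu(\xi+t\beta)=\int_M\langle\mu_\frp(\exp(\xi)\cdot x),\beta\rangle\,\mathrm{d}\nu(x)=\langle\F_\fra(\exp(\xi)\cdot\nu),\beta\rangle$. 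Hence $\nabla\Psi_\nu(\xi)=\F_\fra(\exp(\xi)\cdot\nu)$, so the image of $\F_\fra$ on $\A\cdot\nu$ is exactly $\nabla\Psi_\nu(\fra)$. A further differentiation yields $\mathrm{Hess}\,\Psi_\nu(\xi)(\beta,\gamma)=\int_M\langle\beta_M,\gamma_M\rangle(\exp(\xi)\cdot x)\,\mathrm{d}\nu(x)$, which is positive semidefinite, so $\Psi_\nu$ is convex.

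Next I would isolate the flat directions. As $\exp(\xi)$ acts on $M$ by diffeomorphisms and $\fra$ is Abelian, $\beta_M$ vanishes on $\exp(\xi)\cdot\mathrm{supp}(\nu)$ precisely when it vanishes on $\mathrm{supp}(\nu)$, so the kernel of $\mathrm{Hess}\,\Psi_\nu(\xi)$ is independent of $\xi$ and equals $\fra_\nu:=\{\beta\in\fra : \beta_M|_{\mathrm{supp}(\nu)}=0\}$; using that $t\mapsto\exp(t\beta)\cdot x$ is the gradient flow of $\mu_\frp^\beta$, the same computation shows $\fra_\nu$ is precisely the Lie algebra of the $\A$-stabiliser of $\nu$. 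For $\beta\in\fra_\nu$ the quantity $\langle\F_\fra(\exp(\xi)\cdot\nu),\beta\rangle$ is constant in $\xi$, so the image lies in an affine subspace with direction $\franp$; its component along $\franp$ depends only on $\xi$ modulo $\fra_\nu$ and equals $\nabla\bar\Psi_\nu$, where $\bar\Psi_\nu:=\Psi_\nu|_{\franp}$ is smooth with positive definite Hessian, hence strictly convex. Being finite on all of $\franp$, $\bar\Psi_\nu$ is of Legendre type, so $\nabla\bar\Psi_\nu$ is a homeomorphism of $\franp$ onto $\mathrm{int}\,\mathrm{dom}\,\bar\Psi_\nu^*$, an open convex set; thus the image of $\F_\fra$ on $\A\cdot\nu$ is an open convex subset of the affine subspace with direction $\franp$, which is the first assertion.

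For the second assertion, $\overline{\A\cdot\nu}$ is weak$^*$ compact, so $\F_\fra(\overline{\A\cdot\nu})$ is compact and, by continuity of $\F_\fra$, equals the closure of the open convex set just found; hence it is compact and convex. Writing $\F_\fra^\beta(\sigma)=\langle\F_\fra(\sigma),\beta\rangle$ and letting $K:=\F_\fra(\overline{\A\cdot\nu}\cap\PM^\A)$ be the (compact) image, the inclusion $\mathrm{conv}\,K\subseteq\F_\fra(\overline{\A\cdot\nu})$ is immediate. For the reverse, suppose some $w\in\F_\fra(\overline{\A\cdot\nu})$ lies outside the compact convex set $\mathrm{conv}\,K$ and choose $\beta\in\fra$ with $\langle w,\beta\rangle>\max_{\mathrm{conv}\,K}\langle\cdot,\beta\rangle$. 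Since this condition is open in $\beta$ and the set where $\{\beta_M=0\}=M^\A$ is the complement of a finite union of proper subspaces, hence dense, I may in addition assume $\{\beta_M=0\}=M^\A$. The maximiser $\sigma^*$ of $\F_\fra^\beta$ over the compact $\A$-invariant set $\overline{\A\cdot\nu}$ then satisfies $\dt\F_\fra^\beta(\exp(t\beta)\cdot\sigma^*)=\int_M|\beta_M|^2\,\mathrm{d}(\exp(t\beta)_*\sigma^*)=0$ (otherwise the maximum would be exceeded along the orbit), whence $\mathrm{supp}(\sigma^*)\subseteq\{\beta_M=0\}=M^\A$, i.e.\ $\sigma^*\in\PM^\A$. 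But then $\langle w,\beta\rangle\le\F_\fra^\beta(\sigma^*)=\langle\F_\fra(\sigma^*),\beta\rangle\le\max_{\mathrm{conv}\,K}\langle\cdot,\beta\rangle<\langle w,\beta\rangle$, a contradiction; therefore $\F_\fra(\overline{\A\cdot\nu})=\mathrm{conv}\,K$.

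The main obstacle is twofold. Analytically, the convexity in the first part is not formal: it requires recognising $\bar\Psi_\nu$ as a convex function of Legendre type and invoking Legendre duality, the delicate point being that $\nabla\bar\Psi_\nu$ need not be proper, so its image is only the \emph{interior} of $\mathrm{dom}\,\bar\Psi_\nu^*$ rather than a closed polytope. In the second part the crux is the reduction to $\A$-fixed measures: one must know that a generic one-parameter subgroup has zero set $\{\beta_M=0\}$ equal to the common fixed set $M^\A$, which rests on the finiteness of the infinitesimal isotropy data of the $\A$-action on the compact manifold $M$.
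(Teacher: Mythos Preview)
Your argument is correct. For the first assertion your approach coincides with the paper's: both integrate a Kempf--Ness potential and use that the gradient of a smooth strictly convex function on a vector space is a diffeomorphism onto an open convex set. Your treatment is in fact somewhat more self-contained: the paper establishes the affine-subspace statement separately, by invoking that $\nu$ is supported on $M^{\fra_\nu}$ and that $\mu_\fra(\A\cdot x)\subset\alpha_j+\franp$ for each component $M_j$ of $M^{\fra_\nu}$ (results quoted from \cite{BilZed,HeiSto}), whereas you extract it directly from the observation that the Hessian kernel is constantly $\fra_\nu$, via Cauchy--Schwarz for the semidefinite form $\int_M\langle\beta_M,\gamma_M\rangle\,\mathrm d\nu$.

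For the second assertion your route is genuinely different. The paper argues via extreme points: if $\beta$ is extreme in the compact convex set $\Fa(\overline{\A\cdot\nu})$ and $\tilde\nu$ maps to it, then applying the first part to $\tilde\nu$ gives an open convex set inside $\Fa(\overline{\A\cdot\nu})$ of direction $\fra_{\tilde\nu}^\perp$ containing $\beta$, which forces $\fra_{\tilde\nu}^\perp=\{0\}$, i.e.\ $\tilde\nu\in\PM^\A$. Your separating-hyperplane argument avoids this recursion but pays for it with the genericity step: you need that the set of $\beta\in\fra$ with $\{\beta_M=0\}=M^\A$ is dense, which rests on the finiteness of infinitesimal isotropy types. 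This is true here (since $\fra_x$ coincides with the $i\fra$-isotropy at $x$ for the compact torus $\overline{\exp(i\fra)}\subset\U$ acting on the compact $Z$), but it is an extra input that the paper's extreme-point argument sidesteps entirely. On the other hand, your method yields directly that $\overline{\A\cdot\nu}\cap\PM^\A$ is nonempty, which in the paper's approach is only implicit.
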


As an immediate consequence of this theorem, we get that $\Fa(\A\cdot\nu)$ is a convex subset of $\fra$
whenever the Lie algebra of the isotropy group $\A_\nu$ is trivial (Corollary \ref{corabthm}).
The image of the map $\F_\fra$ is contained in the convex hull of $\mu_{\fra}(M)$.
Hence, when $P\coloneqq\mu_\fra (M)$ is a polytope, it is natural to investigate under which conditions $\F_{\fra} (A \cdot \nu)$ coincides with $\mathrm{int}(P)$. 
We point out that the convexity of $P$ is not known for a generic $\A$-invariant closed submanifold $M$ of $Z$.
It holds if $\G=\U^\C$ and $M$ is a complex connected submanifold by the Atiyah-Guillemin-Sternberg convexity theorem \cite{Ati,GuiSte},
or, more in general, if $Z$ is a Hodge manifold and $M$ is an irreducible semi-algebraic subset of $Z$ with irreducible real algebraic Zariski closure \cite{BilGhiHei2,HeiSchu}.
In the recent paper \cite{bgkn}, the authors gave a short proof of this property when $M$ is an $\A$-invariant compact connected real analytic submanifold of $\mathbb P^n (\C)$.
The key point is that for any $\beta \in \fra$ the Morse-Bott function $\mu_\frp^\beta$ has a unique local maximum.
Under this assumption, in Theorem \ref{AbThmP} we show that if $\A_\nu$ is trivial and for any $\beta \in \fra$ the unstable manifold corresponding to the unique maximum
of $\mu_{\frp}^\beta$ has full measure, then $\F_{\fra} (A \cdot \nu)$ coincides with $\mathrm{int}(P)$. 
It is worth underlining here that a further result shown in \cite{bgkn} allows to obtain an alternative proof of the convexity properties of the map $\Fa$ along the $\A$-orbits.  
Nevertheless, in our proof the image of $\Fa$ along the orbits is better understood. Moreover, it is completely determined for a large class of probability measures in 
Theorem \ref{AbThmP}.

In Section \ref{convgencase}, we focus our attention on the non-Abelian case.
Let $\Omega(\mu_\frp)$ denote the interior of the convex hull of $\mu_\frp(M)$ in $\frp$.
In Theorem \ref{ThmNAb}, we prove that, under a mild regularity assumption on the measure $\nu$,  $\F_\frp(\G\cdot\nu)=\Omega(\mu_\frp)$ and that
the map
\[
F_\nu:\G \rightarrow \Omega (\mu_{\frp}),\quad F_\nu(g)\coloneqq\F_\frp(g\cdot\nu),
\]
is a smooth fibration. Notice that the assumptions in Theorem \ref{AbThmP} are weaker than those of Theorem \ref{ThmNAb}.
Finally, if $\nu$ is a $\K$-invariant smooth measure on $M,$ we show that the map $F_\nu$ descends to a map on $\G/\K$ which is a diffemorphism onto $\Omega(\mu_\frp)$. 
(Corollary \ref{cordescdiffeo}). 
These results may be regarded as a generalization of those obtained in \cite{BilGhi} when $\G=\U^\C$ and $M=Z$ is a K\"ahler manifold. 
However, our proofs are slightly different, since the real case is more involved than the complex one and a new technical result is needed (cf.~Appendix \ref{apdx}).  
Moreover, Corollary \ref{cordescdiffeo} suggests that when $M$ is an adjoint orbit and $\nu$ is a $\K$-invariant smooth measure, then a potential compactification of $\G/\K$ 
is given by the convex hull of $M.$ This is an analogue of a classical result due to Kor\'anyi \cite{Kor}.

The present paper is organized as follows. In Section \ref{prelimsect}, we review the main properties of compatible groups and of the $\G$-gradient map.
In Section \ref{sectmeas}, we recall some useful results on measures and we introduce the gradient map.
The convexity properties of the gradient map in the Abelian and in the non-Abelian case are investigated in Section \ref{sectabres} and in Section \ref{convgencase}, respectively.
Finally, in Appendix \ref{apdx}, we prove a technical result which is of interest in Section \ref{convgencase}.

\section{Preliminaries}\label{prelimsect}
\subsection{Cartan decomposition and compatible subgroups}\label{Cartdec}
Let $\U$ be a compact connected Lie group, denote by $\fru$ its Lie algebra and by $\U^\C$ its complexification.
It is well-known (see for instance \cite{Kna}) that $\U^\C$ is a complex reductive Lie group with Lie algebra $\fru^\C=\fru\oplus i\fru$
and that it is diffeomorphic to $\U\times i\fru$ via the real analytic map
\[
\U\times i\fru\rightarrow\U^\C,\quad (u,i\xi)\mapsto u\exp(i\xi).
\]
The resulting decomposition $\U^\C=\U\exp(i\fru)$ is called {\em Cartan decomposition} of $\U^\C$.

A closed connected subgroup $\G\subseteq\U^\C$ with Lie algebra $\frg$ is said to be {\em compatible} with the Cartan decomposition of $\U^\C$
if $\G=\K\exp(\frp)$, where $\K\coloneqq \G\cap\U$ and $\frp \coloneqq \frg\cap i\fru$ is a $\K$-stable linear subspace of $i\fru$ (cf.~\cite{HeiSchSto,HeiSto}).
In such a case,  $\K$ is a maximal compact subgroup of $\G$.
The Lie algebra of $\G$ splits as $\frg=\frk\oplus\frp$, where $\frk \coloneqq \mbox{Lie(K)}$, and the following inclusions hold
\[
[\frk,\frk]\subset\frk,\quad[\frk,\frp]\subset\frp,\quad [\frp,\frp]\subset\frk.
\]

On the Lie algebra $\fru^\C=\fru\oplus i\fru$ there exists a nondegenerate, $\Ad(\U^\C)$-invariant, symmetric $\R$-bilinear form
$B:\fru^\C\times\fru^\C\rightarrow \R$ which is positive definite on $i\fru$, negative definite on $\fru$ and such that the decomposition $\fru\oplus i\fru$ is $B$-orthogonal
(see~e.g.~\cite[p.~585]{BilGhiHei}). In what follows, we let $\langle\cdot,\cdot\rangle \coloneqq B|_{i\fru\times i\fru}$.

Whenever $\G=\K\exp(\frp)$ is a compatible subgroup of $\U^\C$, the restriction of $B$ to $\frg$ is $\Ad(\K)$-invariant, positive definite on $\frp$, negative definite on $\frk$,
and fulfils $B(\frk,\frp)=0$.

\subsection{The G-gradient map}\label{grmapsect}
Let $\U$ and $\U^\C$ be as in $\S$\ref{Cartdec}.
Consider a compact K\"ahler manifold $(Z,J,\omega)$, assume that $\U^\C$ acts holomorphically on it and that a Hamiltonian action of $\U$ on $Z$ is defined.
Then, the K\"ahler form $\omega$ is $\U$-invariant and there exists a {\it momentum mapping} $\mu:Z\rightarrow\fru^*$.
By definition, $\mu$ is $\U$-equivariant and for each $\xi\in\fru$
\[
d\mu^\xi=\iota_{\xi_Z}\omega,
\]
where $\mu^\xi\in\mathcal{C}^\infty(Z)$ is defined by $\mu^\xi(z)=\mu(z)(\xi)$, for every point $z\in Z$, and
$\xi_Z\in\mathfrak{X}(Z)$ is the {\it fundamental vector field} of $\xi$ induced by the $\U$-action, namely
the vector field on $Z$ whose value at $z\in Z$ is
\[
\xi_Z(z)=\left.\frac{d}{dt}\right|_{t=0}\exp(t\xi) \cdot z.
\]

Since $\U$ is compact, we can identify $\fru^*$ with $\fru$ by means of an $\Ad(\U)$-invariant scalar product on $\fru$. Consequently, we can regard $\mu$ as a $\fru$-valued map.

Let $\G=\K\exp(\frp)$ be a compatible subgroup of $\U^\C$. The composition of $\mu$ with the orthogonal projection of $\fru$ onto $i\frp\subset\fru$ defines a
$\K$-equivariant map $\mu_{i\frp}:Z\rightarrow i\frp,$ which represents the analogue of $\mu$ for the $\G$-action.
Following \cite{HeiSchu,HeiSchSto,HeiSto}, in place of $\mu_{i\frp}$ we consider
\[
\mu_\frp:Z\rightarrow \frp,\quad \mu_\frp(z) \coloneqq -i\,\mu_{i\frp}(z).
\]
As the $\U^\C$-action on $Z$ is holomorphic, for every $\beta\in\frp$ the fundamental vector field $\beta_Z\in\mathfrak{X}(Z)$
induced by the $\G$-action is the gradient of the function
\[
\mu^\beta_\frp:Z\rightarrow\R,\quad \mu^\beta_\frp(z) \coloneqq  \langle \mu_\frp(z),\beta\rangle,
\]
with respect to the Riemannian metric $\omega(\cdot,J\cdot)$. This motivates the following.
\begin{definition}
$\mu_\frp$ is called {\em $\G$-gradient map} associated with $\mu$.
\end{definition}

Let $M$ be a $\G$-stable submanifold of $Z.$ We use the symbol $\mu_\frp$ to denote the $\G$-gradient map restricted to $M,$ too.
Then, for any $\beta\in\frp$ the fundamental vector field $\beta_M\in\mathfrak{X}(M)$
is the gradient of $\mu^\beta_\frp:M\rightarrow\R$ with respect to the induced Riemannian metric on $M.$
Moreover, if $M$ is compact, $\mu^\beta_\frp$ is a Morse-Bott function (see e.g.~\cite[Cor.~2.3]{BilGhiHei}). Thus, denoted by $c_1<\cdots<c_r$ the critical values of $\mu^\beta_\frp$,
$M$ decomposes as
\begin{equation}\label{MBdec}
M=\bigsqcup_{j=1}^rW_j,
\end{equation}
where for each $j=1,\ldots,r,$ $W_j$ is the unstable manifold of the critical component $(\mu_\frp^\beta)^{-1}(c_j)$ for the gradient flow of $\mu^\beta_\frp$
(see for instance \cite{HeiSchw,HeiSchSto} for more details).

\section{Measures}\label{sectmeas}
In the first part of this section we recall some known results about measures. The reader may refer for instance to \cite{DunSch,Fol} for more details.

Let $M$ be a compact manifold and let $\MM$ denote the vector space of finite signed Borel measures on $M.$
By \cite[Thm.~7.8]{Fol}, such measures are Radon.
Then, by the Riesz Representation Theorem \cite[Thm.~7.17]{Fol},
$\MM$ is the topological dual of the Banach space $(\CM,\left\|\cdot\right\|_\infty)$,
namely the space of real valued continuous functions on $M$ endowed with the sup-norm.
As a consequence, $\MM$ is endowed with the weak$^*$ topology \cite[p.~169]{Fol}.

The set of {\em Borel probability measures} on $M$ is the compact convex subset $\PM\subset\MM$ given by the intersection of the cone of positive measures on $M$
and the affine hyperplane $\{\nu\in\MM\st\nu(M)=1\}$.
Observe that the weak$^*$ topology on $\PM$ is metrizable, since $\CM$ is separable \cite[p.~426]{DunSch}.

Given a measurable map $f:M\rightarrow N$ between measurable spaces and a measure $\nu$ on $M,$ the {\em image measure} $f_*\nu$ of $\nu$ is the measure on $N$ defined by
$f_*\nu(A) \coloneqq \nu(f^{-1}(A))$ for every measurable set $A\subseteq N.$ $f_*\nu$ satisfies the following {\em change of variables formula}
\begin{equation}\label{changevar}
\int_Nh(y)d(f_*\nu)(y)=\int_M h(f(x))d\nu(x).
\end{equation}

When a Lie group $\G$ acts continuously  on a compact manifold $M,$ it is possible to define an action of $\G$ on $\PM$ as follows:
\begin{equation}\label{actGPM}
\G\times\PM\rightarrow\PM,\quad (g,\nu)\mapsto g_*\nu \coloneqq (\mathcal{A}_g)_*\nu,
\end{equation}
where for each $g\in\G$
\[
\mathcal{A}_g:M\rightarrow M,\quad\mathcal{A}_g(x)=g\cdot x,
\]
is the homeomorphism induced by the $\G$-action on $M.$
By \cite[Lemma 5.5]{BilGhi}, the action \eqref{actGPM} is continuous with respect to the weak$^*$ topology on $\PM.$
In what follows, we denote this action by a dot, i.e., $g\cdot\nu \coloneqq g_*\nu$ whenever $g\in\G$ and $\nu\in\PM.$

The next lemma is an immediate consequence of \cite[Lemma 5.8]{BilGhi}.
\begin{lemma}\label{lemmanae}
Let $M$ be a compact manifold endowed with a smooth action of a Lie group $\G$. Consider $\nu\in\MM$, $\xi\in\frg$, and suppose that
$\xi_M$ vanishes $\nu$-almost everywhere. Then, $\exp(\R\xi)$ is contained in the isotropy group $\G_\nu$ of $\nu$.
\end{lemma}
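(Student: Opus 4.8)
The plan is to prove the statement directly, by showing that for each $t\in\R$ the diffeomorphism $\phi_t\coloneqq\mathcal{A}_{\exp(t\xi)}$ preserves $\nu$, i.e. $(\phi_t)_*\nu=\nu$; by the definition \eqref{actGPM} of the action this is precisely the assertion $\exp(t\xi)\in\G_\nu$ for all $t$, that is $\exp(\R\xi)\subseteq\G_\nu$. The starting observation is that $\{\phi_t\}_{t\in\R}$ is exactly the flow of the fundamental vector field $\xi_M$: using the group law $\exp((t+s)\xi)=\exp(s\xi)\exp(t\xi)$ together with the fact that the action is on the left, one checks that $\frac{d}{dt}\exp(t\xi)\cdot x=\xi_M(\exp(t\xi)\cdot x)$, while $\phi_0=\mathrm{id}$. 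Since $M$ is compact, this flow is complete and defined for all $t\in\R$.

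Next I would analyze the zero set $A\coloneqq\{x\in M\st\xi_M(x)=0\}$. As the action is smooth, $\xi_M$ is a smooth vector field, so $A$ is closed and hence Borel, and the hypothesis that $\xi_M$ vanishes $\nu$-almost everywhere reads $|\nu|(M\setminus A)=0$, where $|\nu|$ denotes the total variation of the signed measure $\nu$. The crucial point is that $A$ coincides with the fixed-point set of the flow: if $\xi_M(x)=0$, then the constant curve $t\mapsto x$ is an integral curve of $\xi_M$ through $x$, so by uniqueness of integral curves $\phi_t(x)=x$ for every $t$. In particular each $\phi_t$ restricts to the identity on $A$.

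I would then evaluate $(\phi_t)_*\nu$ against an arbitrary $h\in\CM$ by means of the change of variables formula \eqref{changevar} and split the integral according to $A$ and its complement:
\[
\int_M h\,\mathrm d\bigl((\phi_t)_*\nu\bigr)=\int_M h\circ\phi_t\,\mathrm d\nu=\int_A h\circ\phi_t\,\mathrm d\nu+\int_{M\setminus A}h\circ\phi_t\,\mathrm d\nu.
\]
The integral over $M\setminus A$ vanishes because $|\nu|(M\setminus A)=0$, whereas on $A$ one has $\phi_t=\mathrm{id}$, so $\int_A h\circ\phi_t\,\mathrm d\nu=\int_A h\,\mathrm d\nu=\int_M h\,\mathrm d\nu$, the last equality again using $|\nu|(M\setminus A)=0$. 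Hence $\int_M h\,\mathrm d((\phi_t)_*\nu)=\int_M h\,\mathrm d\nu$ for every $h\in\CM$, and since $\MM$ is the topological dual of $\CM$ this forces $(\phi_t)_*\nu=\nu$, which is the desired conclusion.

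The argument is essentially routine, so the obstacles are bookkeeping rather than conceptual. The step requiring the most care is the identification of the zero set of $\xi_M$ with the fixed-point set of the flow, which rests on uniqueness of integral curves and on completeness of the flow (guaranteed by compactness of $M$). One must also interpret ``$\nu$-almost everywhere'' through the total variation $|\nu|$ so that the splitting above is legitimate for a \emph{signed} measure, and invoke the fact that a finite signed Borel measure on a compact manifold is determined by its pairing with $\CM$ (Riesz representation), which is what licenses passing from equality of integrals to equality of measures.
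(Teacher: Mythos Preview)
Your proof is correct and follows essentially the same approach as the paper: the paper's argument is simply to observe that $\f_t(x)=\exp(t\xi)\cdot x$ is the flow of $\xi_M$ and then invoke \cite[Lemma~5.8]{BilGhi} to conclude $(\f_t)_*\nu=\nu$, whereas you spell out that cited step in full (identifying the zero set of $\xi_M$ with the fixed-point set of the flow, splitting the integral, and appealing to Riesz representation). Your version is thus a self-contained unpacking of what the paper defers to the reference.
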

\begin{proof}
Since $\xi_M$ vanishes $\nu$-almost everywhere, its flow
\[
\f_t:M\rightarrow M,\quad \f_t(x)=\exp(t\xi)\cdot x,
\]
satisfies ${\f_t}_*\nu=\nu$ for any $t\in\R$ by \cite[Lemma 5.8]{BilGhi}.
\end{proof}
Let us focus on the setting $(M,\G,\K,\mu_\frp)$ introduced at the end of $\S$\ref{grmapsect}.
From now on, we assume that the $\G$-stable submanifold $M\subset Z$ is compact.
By the above results, the group $\G=\K\exp(\frp)$ acts continuously on $\PM$.
Moreover, albeit a reasonable symplectic structure on $\PM$ does not seem to exist,
it is possible to define a map which can be regarded as the analogue of the $\G$-gradient map $\mu_\frp$ for the action of $\G$ on $\PM$.
\begin{definition}
The {\em gradient map} associated with the action of $\G$ on $\PM$ is
\[
\F:\PM\rightarrow\frp,\quad \F(\nu)=\int_M\mu_\frp(x)d\nu(x).
\]
\end{definition}

\begin{remark}
By \cite[Prop.~45]{BilZed}, $\F$ is precisely the gradient map of a Kempf-Ness function for $(\PM,\G,\K)$.
Thus, it is continuous and $\K$-equivariant (cf.~\cite[Sect.~3]{BilZed}).
\end{remark}

Using $\F$, the usual concepts of stability \cite{GeoRobSal,Hei,HeiHuc,HeiHucLoo,HeiLoo,KemNes,Kir,MumFogKir,Schw,Sja} can be defined for probability measures, too
(see also \cite{BilGhi,BilZed}).
For instance, a measure $\nu\in\PM$ is said to be {\em stable} if
\[
\G \cdot \nu\cap\F^{-1}(0)\neq\emptyset
\]
and $\frg_\nu \coloneqq \mbox{Lie}(\G_\nu)$ is conjugate to a subalgebra of $\frk$. In such a case, $\G_\nu$ is compact \cite[Cor.~3.5]{BilGhi}.

In the light of previous considerations, it is natural to ask whether established results for the
$\G$-gradient map \cite{Ati,Dui,GuiSte,HeiSchu,HeiSto2} can be proved also for the gradient map $\F$.
Here, we focus our attention on convexity properties of $\F$. We begin with the following observation.
\begin{lemma}\label{imgradconvhull}
The image of the gradient map $\F:\PM\rightarrow\frp$ coincides with the convex hull $E(\mu_\frp)$ of $\mu_\frp(M)$ in $\frp$.
\end{lemma}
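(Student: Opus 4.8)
The plan is to establish the two inclusions $E(\mu_\frp)\subseteq\F(\PM)$ and $\F(\PM)\subseteq E(\mu_\frp)$ separately. Since $M$ is compact and $\mu_\frp$ is continuous, the image $\mu_\frp(M)$ is a compact subset of the finite-dimensional space $\frp$; by Carath\'eodory's theorem its convex hull $E(\mu_\frp)$ is then compact, in particular closed.

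For the inclusion $E(\mu_\frp)\subseteq\F(\PM)$, I would first observe that every point of $\mu_\frp(M)$ lies in the image of $\F$: given $x_0\in M$, the Dirac measure $\delta_{x_0}$ belongs to $\PM$ and $\F(\delta_{x_0})=\int_M\mu_\frp\,d\delta_{x_0}=\mu_\frp(x_0)$. Next, since integration is linear, $\F$ is an affine map, i.e.\ $\F(t\nu_1+(1-t)\nu_2)=t\,\F(\nu_1)+(1-t)\,\F(\nu_2)$ for $t\in[0,1]$; as $\PM$ is convex, $\F(\PM)$ is a convex subset of $\frp$ containing $\mu_\frp(M)$, hence it contains the convex hull $E(\mu_\frp)$.

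The reverse inclusion $\F(\PM)\subseteq E(\mu_\frp)$ is the main point. Fix $\nu\in\PM$. Applying the change of variables formula \eqref{changevar} to the image measure $(\mu_\frp)_*\nu$ on $\frp$ (componentwise, with $h$ a linear coordinate functional), one has $\F(\nu)=\int_\frp y\,d\big((\mu_\frp)_*\nu\big)(y)$, so $\F(\nu)$ is the barycenter of a probability measure supported in $\mu_\frp(M)\subseteq E(\mu_\frp)$. Because $E(\mu_\frp)$ is closed and convex, it equals the intersection of all closed half-spaces containing it; for each such half-space $H_{v,c}=\{y\in\frp\st\langle y,v\rangle\le c\}$ we have $\langle\mu_\frp(x),v\rangle\le c$ for every $x\in M$, and integrating against $\nu$ gives
\[
\langle\F(\nu),v\rangle=\int_M\langle\mu_\frp(x),v\rangle\,d\nu(x)\le c .
\]
Thus $\F(\nu)$ lies in every such $H_{v,c}$, whence $\F(\nu)\in E(\mu_\frp)$.

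The only delicate point is ensuring that $E(\mu_\frp)$ is closed, so that the supporting-hyperplane characterization of a closed convex set applies; this is guaranteed by the compactness of $\mu_\frp(M)$ together with the finite-dimensionality of $\frp$. Everything else reduces to the linearity of the integral and the convexity of $\PM$.
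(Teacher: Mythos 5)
Your proof is correct and follows essentially the same route as the paper's: one inclusion via Dirac measures (the paper writes the witnessing measure $\sum_j\lambda_j\delta_{x_j}$ explicitly, you phrase it through affinity of $\F$ and convexity of $\PM$, which is the same construction), and the other via the barycenter interpretation of $\F(\nu)$ through the change of variables formula. The only difference is that you spell out, via Carath\'eodory and supporting half-spaces, why the barycenter of a measure supported in $\mu_\frp(M)$ lies in $E(\mu_\frp)$ --- a standard fact the paper leaves implicit.
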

\begin{proof}
Consider $\nu\in\PM$. Observe that $\F(\nu)$ is the barycenter of the measure ${\mu_\frp}_*\nu\in\PR(\mu_\frp(M))$,
since by the change of variables formula \eqref{changevar} we have
\[
\F(\nu)=\int_M\mu_\frp(x)d\nu(x) = \int_{\frp} \beta\,d({\mu_\frp}_*\nu)(\beta).
\]
Thus, $\F(\nu)$ lies in $E(\mu_\frp)$.
Conversely, for any $\gamma\in E(\mu_\frp)$, we can write
\[
\gamma=\sum_{j=1}^m\lambda_j\gamma_j,
\]
for a suitable $m$, where $\sum_{j=1}^m\lambda_j=1$, $\lambda_j\geq0$ and $\gamma_j\in\mu_\frp(M)$.
For each $j=1,\ldots,m$, let $x_j\in M$ be a point in the preimage of $\gamma_j$
and let $\delta_{x_j}$ denote the Dirac measure supported at $x_j$. Then, $\gamma=\F(\widetilde{\nu})$, where
\[
\widetilde{\nu} \coloneqq \sum_{j=1}^m\lambda_j\delta_{x_j}.
\]
\end{proof}

Due to the previous result, in the next sections we shall study the behaviour of $\F$ on the orbits of the $\G$-action.

\section{Convexity properties of $\F$: Abelian case}\label{sectabres}
Let $\fra\subset\frp$ be a Lie subalgebra of $\frg$.
Since $[\frp,\frp]\subset\frk$ and $\frg=\frk\oplus\frp$, $\fra$ is Abelian.
The corresponding Abelian Lie group $\A \coloneqq \exp(\fra)\subset\G$ is compatible with the Cartan decomposition of $\U^\C$
and an $\A$-gradient map $\mu_\fra:M\rightarrow\fra$ is given by $\mu_\fra \coloneqq \pi_\fra \circ \mu_\frp$, where $\pi_\fra$ is the orthogonal projection onto $\fra$.
Therefore, the gradient map associated with the $\A$-action on $\PM$ is
\[
\Fa:\PM\rightarrow\fra,\quad \Fa(\nu)=\int_M\mu_\fra(x)d\nu(x).
\]

Fix a probability measure $\nu\in\PM$. We want to study the behaviour of $\Fa$ on the orbit $\A\cdot\nu$.
First of all, we show that $\A_\nu$ is always compatible.
\begin{lemma}
The isotropy group $\A_\nu$ of $\nu$ is compatible, namely $\A_\nu=\exp(\fra_\nu)$.
\end{lemma}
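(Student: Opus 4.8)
The plan is to combine the vector-group structure of $\A$ with the gradient-flow property of the fundamental vector fields. Since $\fra\subset\frp\subset i\fru$ and the Cartan decomposition realises $\U^\C$ as $\U\times i\fru$, the exponential $\exp\colon\fra\to\A$ is a diffeomorphism; hence every element of $\A$ is uniquely of the form $\exp(\xi)$ with $\xi\in\fra$, and $\A_\nu$ is a closed subgroup of a vector group. The inclusion $\exp(\fra_\nu)\subseteq\A_\nu$ is immediate from $\fra_\nu=\mathrm{Lie}(\A_\nu)$, so the whole content lies in the reverse inclusion: I must show that $\exp(\xi)\in\A_\nu$ forces $\xi\in\fra_\nu$.

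The main tool is a monotonicity argument. For $\xi\in\fra$ the fundamental vector field $\xi_M$ is the gradient of $\mu_\frp^\xi$ on $M$, so the flow $\f_t(x)=\exp(t\xi)\cdot x$ is its gradient flow and $\frac{d}{dt}\mu_\frp^\xi(\f_t(x))=|\xi_M(\f_t(x))|^2\ge 0$. Using the change of variables formula \eqref{changevar}, I would set
\[
g(t)\coloneqq\langle\Fa(\exp(t\xi)\cdot\nu),\xi\rangle=\int_M\mu_\frp^\xi(\f_t(x))\,d\nu(x),
\]
and differentiate under the integral sign (legitimate since $M$ is compact and the flow is smooth) to obtain
\[
g'(t)=\int_M|\xi_M(\f_t(x))|^2\,d\nu(x)\ge 0,
\]
so that $g$ is non-decreasing on $\R$.

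Now assume $\exp(\xi)\in\A_\nu$, i.e.\ $\exp(\xi)\cdot\nu=\nu$. Then $g(1)=g(0)$, and since $g$ is non-decreasing it is constant on $[0,1]$; in particular $g'(0)=0$, which forces $\int_M|\xi_M|^2\,d\nu=0$ and hence $\xi_M=0$ $\nu$-almost everywhere. Lemma \ref{lemmanae} then yields $\exp(\R\xi)\subseteq\A_\nu$, whence $\xi\in\fra_\nu$ and $\exp(\xi)\in\exp(\fra_\nu)$. This gives $\A_\nu\subseteq\exp(\fra_\nu)$ and therefore the desired equality $\A_\nu=\exp(\fra_\nu)$.

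The step I expect to be the crux is the passage from the single fixed-point relation $\exp(\xi)\cdot\nu=\nu$ to the pointwise vanishing $\xi_M=0$ $\nu$-a.e.: this is exactly where the monotonicity of $g$ does the work, ruling out the nontrivial discrete (lattice) component that a closed subgroup of a vector group could otherwise possess.
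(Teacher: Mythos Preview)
Your argument is correct, and it takes a genuinely different route from the paper's own proof. The paper proceeds by shifting the $\A$-gradient map: setting $\widetilde\mu_\fra\coloneqq\mu_\fra-\Fa(\nu)$, one obtains another $\A$-gradient map (legitimate because $\fra$ is Abelian) for which $\widetilde\F_\fra(\nu)=0$, and then the compatibility of $\A_\nu$ follows directly from \cite[Prop.~20]{BilZed}, which asserts compatibility of the isotropy at zeros of the gradient map. Your proof, by contrast, avoids both the shift and the external citation: the monotonicity of $g(t)=\langle\Fa(\exp(t\xi)\cdot\nu),\xi\rangle$ together with $g(1)=g(0)$ forces $g'(0)=\int_M|\xi_M|^2\,d\nu=0$, and then Lemma~\ref{lemmanae} does the rest. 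The paper's approach is shorter on the page and highlights that the result is a special case of a general principle for zeros of gradient maps; your approach is self-contained within the paper and makes transparent exactly why no lattice component can appear in $\A_\nu$---the gradient-flow convexity rules it out.
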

\begin{proof}
Let ${\alpha} \coloneqq \Fa(\nu)\in\fra$.
Since $\fra$ is Abelian, $\widetilde{\mu}_\fra \coloneqq \mu_\fra-{\alpha}$ is still an $\A$-gradient map and the corresponding gradient map
$\widetilde{\F}_\fra:\PM\rightarrow \fra$ satisfies
\[
\widetilde{\F}_\fra(\nu)=\int_M\widetilde{\mu}_\fra(x)d\nu(x)=\Fa(\nu)-{\alpha}\,\nu(M)=0.
\]
Then, $\A_\nu$ is compatible by \cite[Prop.~20]{BilZed}.
\end{proof}

Consider the decomposition
\[
\fra=\fra_\nu\oplus\franp,
\]
where $\fra_\nu^\perp$ is the orthogonal complement of $\fra_\nu$ in $\fra$ with respect to $B|_{\fra\times\fra}$.
We denote by $\pi:\fra\rightarrow\franp$ the orthogonal projection onto $\franp$ and we let $\hat\A \coloneqq \exp(\franp)$.
Since $\exp:\fra\rightarrow\A$ is an isomorphism of Abelian Lie groups, we have $\A=\hat{\A}\A_\nu$ and $\A\cdot\nu=\hat{\A}\cdot\nu.$

We are now ready to state the main result of this section.
\begin{theorem}\label{ThmAb}
The image $\Fa(\A\cdot\nu)$ of the orbit $\A\cdot\nu$ is an open convex subset of an affine subspace of $\fra$ with direction $\franp$.
\end{theorem}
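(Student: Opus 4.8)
The plan is to understand $\Fa$ along the orbit $\A\cdot\nu = \hat\A\cdot\nu$ by parametrizing the orbit via the exponential map and analyzing the resulting function on the vector space $\franp$. Concretely, I would define
\[
F:\franp\rightarrow\fra,\quad F(\xi) \coloneqq \Fa\big(\exp(\xi)\cdot\nu\big),
\]
and aim to show that $F$ is the gradient of a smooth \emph{convex} function $\Psi:\franp\rightarrow\R$, that $\Psi$ is strictly convex, and that the image $F(\franp)$ is an open convex set lying in the affine subspace $\alpha + \franp$ where $\alpha = \Fa(\nu)$. The containment in this affine subspace is the easy part: writing out $\Fa(\exp(\xi)\cdot\nu)$ using the change of variables formula \eqref{changevar}, one sees that the $\fra_\nu$-component is pinned because $\fra_\nu$ acts trivially on $\nu$ and fixes $\alpha$ in the appropriate sense, so only the $\franp$-direction moves.

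First I would compute the derivative of $F$. Differentiating $\Fa(\exp(t\xi)\cdot\nu)$ along the orbit and using that $\beta_M$ is the gradient of $\mu_\frp^\beta$ (hence $\xi_M$ is the gradient of $\mu_\fra^\xi = \langle\mu_\fra(\cdot),\xi\rangle$ with respect to the induced metric on $M$), I expect the Hessian of the candidate potential to come out as a manifestly nonnegative quadratic form: roughly,
\[
\langle d_\xi F(\eta),\zeta\rangle = \int_M \langle \nabla\mu_\fra^\eta, \nabla\mu_\fra^\zeta\rangle\, d(\exp(\xi)\cdot\nu),
\]
which is symmetric in $\eta,\zeta$ (so $F$ is indeed a gradient) and positive semidefinite. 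This shows $\Psi$ is convex; the key point for \emph{strict} convexity is that the quadratic form $\eta\mapsto\int_M|\nabla\mu_\fra^\eta|^2\,d\nu$ degenerates only when $\eta_M$ vanishes $\nu$-almost everywhere, which by Lemma \ref{lemmanae} forces $\eta\in\fra_\nu$, hence $\eta=0$ on $\franp$. Thus the Hessian is positive definite on $\franp$ and $\Psi$ is strictly convex.

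The main obstacle I anticipate is proving that the image $F(\franp)$ is \emph{open}, i.e. that the gradient map of a strictly convex smooth function on a finite-dimensional vector space has open image. Strict convexity gives injectivity of $F=\nabla\Psi$ and, via the inverse function theorem (the Hessian being nondegenerate), that $F$ is a local diffeomorphism onto an open set; local-diffeo plus injective yields that $F$ is an open map onto an open subset of $\franp$. The genuinely delicate step is \emph{convexity of the image}: for the gradient of a convex function this need not hold for an arbitrary convex function, so I would need to invoke the specific structure here. The natural route is to show that $F(\franp) = \mathrm{int}\,\Fa(\overline{\A\cdot\nu})$ relative to the affine subspace and use a limiting/properness argument: as $\xi\to\infty$ along any ray in $\franp$, the measure $\exp(\xi)\cdot\nu$ converges (after passing to the orbit closure) to a measure supported on unstable manifolds, and $\Psi$ grows linearly in a controlled way, so that the image is exactly the interior of a convex polytope-like set. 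I would handle this by proving that $\nabla\Psi$ of a strictly convex function whose recession behaviour is prescribed by $\overline{\A\cdot\nu}$ has image equal to the interior of the convex hull $\Fa(\overline{\A\cdot\nu}\cap\PM^\A)$, which is convex, thereby closing the argument and simultaneously setting up the second assertion of the theorem about orbit closures.
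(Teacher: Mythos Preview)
Your core strategy matches the paper's: parametrize the orbit by $\franp$, realize $\Fa$ along the orbit as the gradient of a strictly convex potential, and establish strict convexity by showing the Hessian is positive definite via Lemma~\ref{lemmanae}. The paper does this through the Kempf--Ness function of \cite[Thm.~39]{BilZed}, setting $f(\alpha)=\int_M\Psi(x,\exp(\alpha))\,d\nu(x)$, while you construct the potential by observing that $dF$ is symmetric; these are equivalent. Your argument that the $\fra_\nu$-component is constant can be made rigorous by your own computation: for $\gamma\in\fra_\nu$ one has $\frac{d}{dt}\big|_{t=0}\langle\Fa(\exp(t\gamma)\cdot\nu),\gamma\rangle=\int_M\|\gamma_M\|^2\,d\nu=0$, hence $\gamma_M$ vanishes $\nu$-a.e., and the same holds for $\exp(\xi)\cdot\nu$ since $\A$ is abelian. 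The paper instead argues structurally via the decomposition of $M^{\fra_\nu}$ into $\A$-stable pieces and \cite{HeiSto}; either route works.

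Where your plan goes astray is precisely the step you flagged as delicate. You assert that convexity of the image ``need not hold for an arbitrary convex function'' and propose to recover it through a properness/recession analysis tied to $\overline{\A\cdot\nu}$ and $\PM^\A$. That detour is both unnecessary and, as written, circular: the description of $\Fa(\overline{\A\cdot\nu})$ as the convex hull of $\Fa(\overline{\A\cdot\nu}\cap\PM^\A)$ is exactly the proposition that \emph{follows} Theorem~\ref{ThmAb} in the paper and is proved using Theorem~\ref{ThmAb}. What you are missing is a classical fact in convex analysis: if $f$ is a smooth strictly convex function on a finite-dimensional vector space, then $df$ is a diffeomorphism onto an \emph{open convex} subset of the dual. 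The paper simply invokes this (citing \cite[p.~122]{Gui}) and is done. Once you know this result, your argument closes immediately with no need for any limiting analysis along the orbit closure.
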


Before proving Theorem \ref{ThmAb}, we show a preliminary lemma.
\begin{lemma}\label{convexproj}
The projection of $\Fa(\hat{\A}\cdot \nu)$ onto $\franp$ is convex.
\end{lemma}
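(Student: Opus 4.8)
The plan is to show that the projection of $\Fa$ onto $\franp$, restricted to the orbit $\hat{\A}\cdot\nu$, is the gradient of a strictly convex function on $\franp$, and then to invoke Legendre duality to conclude that its image is convex. First I would use the change of variables formula \eqref{changevar} to rewrite the relevant map. Setting $\mu_{\franp}\coloneqq\pi\circ\mu_\fra:M\to\franp$ and recalling that $\fra$ is Abelian, so that $\exp$ restricts to a homomorphism on $\franp$, the map
\[
\psi:\franp\to\franp,\quad \psi(\beta)\coloneqq\pi\big(\Fa(\exp(\beta)\cdot\nu)\big)=\int_M\mu_{\franp}(\exp(\beta)\cdot x)\,d\nu(x),
\]
has image exactly equal to $\pi(\Fa(\hat{\A}\cdot\nu))$, since $\hat{\A}=\exp(\franp)$. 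Thus it suffices to prove that $\psi(\franp)$ is convex.

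Next I would produce a potential for $\psi$. By the Remark following the definition of $\F$, the gradient map arises from a Kempf--Ness function for $(\PM,\G,\K)$; restricting it along $\exp(\franp)$ yields a smooth function $\Psi:\franp\to\R$, $\Psi(\beta)\coloneqq\mathrm{KN}(\nu,\exp(\beta))$, whose first variation along $\gamma\in\franp$ is
\[
\dtz\Psi(\beta+t\gamma)=\langle\F(\exp(\beta)\cdot\nu),\gamma\rangle=\langle\psi(\beta),\gamma\rangle,
\]
where I used $\gamma\in\fra$ to replace $\F$ by $\Fa$ and $\gamma\in\franp$ to replace $\Fa$ by $\pi\circ\Fa$; hence $\nabla\Psi=\psi$. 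Differentiating once more, and using that for $\gamma\in\fra\subset\frp$ the fundamental vector field $\gamma_M$ is the gradient of $\mu_\fra^\gamma$ with respect to the induced metric (so that $\dt\,\mu_\fra^\gamma(\exp(t\gamma)\cdot y)=|\gamma_M|^2$), one obtains
\[
\left.\frac{d^2}{dt^2}\right|_{t=0}\Psi(\beta+t\gamma)=\int_M|\gamma_M|^2(\exp(\beta)\cdot x)\,d\nu(x)\ge0,
\]
so that $\Psi$ is convex, all differentiations under the integral sign being justified by the compactness of $M$.

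The crucial point is the \emph{strict} convexity of $\Psi$, and this is exactly where the decomposition $\fra=\fra_\nu\oplus\franp$ enters. If the last integral vanished for some $\gamma\in\franp\setminus\{0\}$ and some $\beta$, then $\gamma_M$ would vanish $(\exp(\beta)\cdot\nu)$-almost everywhere, whence Lemma \ref{lemmanae} gives $\exp(\R\gamma)\subseteq\A_{\exp(\beta)\cdot\nu}$. Since $\A$ is Abelian, the isotropy is constant along the orbit, $\A_{\exp(\beta)\cdot\nu}=\A_\nu$, so $\gamma\in\fra_\nu\cap\franp=\{0\}$, a contradiction. Therefore $\Psi$ is strictly convex on all of $\franp$.

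Finally I would conclude by Legendre duality. Being finite, smooth and strictly convex on the vector space $\franp$, the function $\Psi$ is of Legendre type: it is automatically essentially smooth, its domain being open, and it is essentially strictly convex. By the classical theory of the Legendre transform (Rockafellar), the gradient $\nabla\Psi$ is then a homeomorphism of $\franp$ onto the interior of the domain of the conjugate function $\Psi^*$, which is an open convex set. Hence $\psi(\franp)=\nabla\Psi(\franp)$ is open and convex, which proves the lemma. I expect the main obstacle to be precisely this last step: the image of the gradient of a convex function need not be convex in general, and it is the strict convexity established above---forced by the Abelian isotropy argument---that upgrades $\Psi$ to a function of Legendre type and thereby guarantees convexity of the image.
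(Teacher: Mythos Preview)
Your proof is correct and follows essentially the same approach as the paper: both construct a potential on $\franp$ whose gradient is $\beta\mapsto\pi(\Fa(\exp(\beta)\cdot\nu))$, establish strict convexity via the isotropy argument through Lemma~\ref{lemmanae}, and then invoke the standard convex-analysis fact that the gradient of a smooth strictly convex function on a vector space is a diffeomorphism onto an open convex set. The only cosmetic differences are that the paper builds the potential by integrating a Kempf--Ness function for $(M,\A,\{e\})$ against $\nu$ (citing \cite{Gui} for the final step), whereas you use the Kempf--Ness function for $(\PM,\G,\K)$ directly and cite Rockafellar's Legendre duality---these are the same object and the same conclusion.
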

\begin{proof}
By \cite[Thm.~39]{BilZed}, there exists a Kempf-Ness function $\Psi:M\times\A\rightarrow\R$ for $(M,\A,\{e\})$, where $e\in\A$ is the identity element.
Recall that for each point $x\in M$ the function $\Psi(x,\cdot)$ is smooth on $\A$, and that for every $\gamma\in\fra$
\begin{equation}\label{KNconv}
\frac{d^2}{dt^2}\Psi(x,\exp(t\gamma))\geq0,
\end{equation}
and it vanishes identically if and only if $\exp(\R\gamma)\subset \A_x$. Moreover, for every $a,b\in\A,$ the following condition is satisfied
\begin{equation}\label{KNcocy}
\Psi(x,ab) = \Psi(x,b)+\Psi(b\cdot x,a).
\end{equation}
$\Psi$ is related to the $\A$-gradient map $\mu_\fra$ by
\begin{equation}\label{KNdefn}
\dtz\Psi(x,\exp(t\gamma))=\langle\mu_\fra(x),\gamma \rangle.
\end{equation}
We define a function $f:\franp\rightarrow\R$ as follows
\[
f(\alpha) \coloneqq \int_M\Psi(x,\exp(\alpha))d\nu(x).
\]
We claim that $f$ is strictly convex. By \eqref{KNconv} and \eqref{KNcocy}, for every $\alpha,\beta\in\franp$
\[
\frac{d^2}{dt^2}f(t\beta+\alpha) = \int_M\frac{d^2}{dt^2}\Psi(\exp(\alpha)\cdot x,\exp(t\beta))d\nu(x) \geq0.
\]
If it was identically zero, then $\frac{d^2}{dt^2}\Psi(\exp(\alpha)\cdot x,\exp(t\beta))$ would vanish $\nu$-almost everywhere.
As a consequence, for every point $x$ outside a set of $\nu$-measure zero we would have
$\exp(\R\beta)\subset\A_{\exp(\alpha)\cdot x} = \A_x$, which implies that $\beta_M(x)=0$.
Therefore, $\exp(\R\beta)\subset\A_\nu$ by Lemma \ref{lemmanae}, which is a contradiction.
By a standard result in convex analysis (see for instance \cite[p.~122]{Gui}), the pushforward $df:\franp\rightarrow(\franp)^*$ is a diffeomorphism onto an open convex subset of $(\franp)^*$.
Now, using \eqref{changevar}, \eqref{KNcocy}, \eqref{KNdefn}, for each $\alpha,\beta\in\franp$ we have
\begin{eqnarray*}
df(\alpha)(\beta) 	&=&\dtz f(t\beta+\alpha)\\
				&=& \int_M\dtz\Psi(\exp(\alpha)\cdot x,\exp(t\beta))d\nu(x)\\
				&=& \langle\int_M\mu_\fra(\exp(\alpha)\cdot x)d\nu(x),\beta\rangle\\
				&=& \langle\int_M\mu_\fra(y)d(\exp(\alpha)\cdot\nu)(y),\beta\rangle\\
				&=& \langle \Fa(\exp(\alpha)\cdot\nu),\beta\rangle\\
				&=& \langle \pi(\Fa(\exp(\alpha)\cdot\nu)),\beta\rangle,
\end{eqnarray*}
from which the assertion follows.
\end{proof}

\begin{corollary}\label{corabthm}
If $\fra_\nu=\{0\},$ then $\Fa(\A\cdot\nu)$ is convex in $\fra$ and the map
\[
F^{\A}_\nu:\A\rightarrow\fra,\quad F^{\A}_\nu(a) \coloneqq \Fa(a\cdot\nu),
\]
is a diffeomorphism onto $\Fa(\A\cdot\nu)$.
\end{corollary}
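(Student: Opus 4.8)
The plan is to specialize the proof of Lemma~\ref{convexproj} to the case $\fra_\nu=\{0\}$, in which the projection $\pi$ becomes the identity and every object that was previously only controlled on $\franp$ is now controlled on all of $\fra$. First I would record that $\fra_\nu=\{0\}$ forces $\franp=\fra$, $\hat{\A}=\A$, $\A_\nu=\{e\}$ and $\pi=\mathrm{id}_\fra$. In particular, the auxiliary function $f\colon\franp\to\R$, $f(\alpha)=\int_M\Psi(x,\exp(\alpha))\,d\nu(x)$, built in the proof of Lemma~\ref{convexproj}, is now defined on all of $\fra$. The strict convexity argument given there applies verbatim: the only way it could fail is if some nonzero $\beta\in\fra$ satisfied $\exp(\R\beta)\subset\A_\nu$, and this is excluded precisely by $\fra_\nu=\{0\}$ together with Lemma~\ref{lemmanae}. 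I would also note that $f$ is smooth, since $\Psi(x,\cdot)$ is smooth and one may differentiate under the integral sign over the compact manifold $M$ against the finite measure $\nu$.

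Next I would invoke the same convex-analysis result used in Lemma~\ref{convexproj} (see \cite[p.~122]{Gui}): for a smooth strictly convex function $f$ on the vector space $\fra$, the differential $df\colon\fra\to\fra^*$ is a diffeomorphism onto an open convex subset of $\fra^*$. The computation concluding the proof of Lemma~\ref{convexproj}, now with $\pi=\mathrm{id}_\fra$, gives $df(\alpha)(\beta)=\langle\Fa(\exp(\alpha)\cdot\nu),\beta\rangle$ for all $\alpha,\beta\in\fra$. Under the identification of $\fra^*$ with $\fra$ determined by the scalar product $\langle\cdot,\cdot\rangle=B|_{\fra\times\fra}$, this says exactly that $df$ coincides with the orbit map $\alpha\mapsto\Fa(\exp(\alpha)\cdot\nu)$.

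Finally, since $\exp\colon\fra\to\A$ is an isomorphism of Abelian Lie groups, writing $a=\exp(\alpha)$ yields $F^{\A}_\nu(a)=\Fa(\exp(\alpha)\cdot\nu)=df(\alpha)$ under the above identification, so that $F^{\A}_\nu=df\circ(\exp|_\fra)^{-1}$ is a composition of diffeomorphisms. Because $\exp$ is onto $\A$, the image $F^{\A}_\nu(\A)=\Fa(\A\cdot\nu)$ equals the open convex set $df(\fra)$, viewed inside $\fra$; hence $\Fa(\A\cdot\nu)$ is open and convex and $F^{\A}_\nu$ is a diffeomorphism onto it. The only step requiring genuine care is the identification of $df$ with the orbit map under the inner-product isomorphism $\fra^*\cong\fra$; but this identification is already carried out inside the proof of Lemma~\ref{convexproj}, so the corollary follows essentially by reading that proof in the degenerate case $\fra_\nu=\{0\}$.
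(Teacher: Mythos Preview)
Your proposal is correct and is precisely the argument the paper intends: the corollary is stated immediately after Lemma~\ref{convexproj} with no separate proof, because it is exactly the special case $\fra_\nu=\{0\}$ of that lemma, where $\franp=\fra$, $\pi=\mathrm{id}_\fra$, and the diffeomorphism $df$ identifies with $F^{\A}_\nu\circ\exp$. Nothing further is needed.
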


\begin{proof}[Proof of Theorem \ref{ThmAb}]
Since $\A_\nu$ is compatible, it follows from the proof of \cite[Prop.~52]{BilZed} that $\nu$ is supported on
\[
M^{\fra_\nu} \coloneqq \{x\in M\st \xi_M(x)=0~\forall~\xi\in\fra_\nu\}.
\]
By \cite{HeiSchw,HeiSchSto}, there exists a decomposition
\[
M^{\fra_\nu} = M_1\sqcup\cdots\sqcup M_n,
\]
where each $M_j$ is an $\A$-stable connected submanifold of $M.$ Consequently,
\[
\nu=\sum_{j=1}^n\lambda_j\nu_j,
\]
where for $j=1,\ldots,n$, $\nu_j$ is a probability measure on $M_j$,  $\lambda_j\geq0$ and $\sum_{j=1}^n\lambda_j=1$.
By \cite{HeiSto}, for every $x\in M_j$ the image $\mu_\fra(\A\cdot x)$ of $\A\cdot x$ is contained in an affine subspace $\alpha_j+\franp$ of $\fra$.
Then, since $M_j$ is $\A$-stable, there is a map $\widetilde{\mu}_j:M_j\rightarrow\franp$ such that $\mu_\fra(a\cdot x) = \alpha_j+\widetilde{\mu}_j(a\cdot x)$, for every $a\in\A$.
Now, we have
\begin{eqnarray*}
\Fa(a\cdot\nu)   &=& \int_M\mu_\fra(x)d(a\cdot\nu)(x)\\
			&=& \int_M\mu_\fra(a\cdot x)d\nu(x)\\
			&=&\sum_{j=1}^n\lambda_j\int_{M_j}\mu_\fra(a\cdot x)d\nu_j(x)\\
			&=&\sum_{j=1}^n\lambda_j\alpha_j + \sum_{j=1}^n\lambda_j\int_{M_j}\tilde{\mu}_j(a\cdot x) d\nu_j(x).
\end{eqnarray*}
Hence, $\Fa(\A\cdot\nu)\subseteq\alpha+\franp$, where $\alpha \coloneqq \sum_{j=1}^n\lambda_j\alpha_j$.
Using Lemma \ref{convexproj}, we can conclude that $\Fa(\A\cdot\nu)$ is an open convex subset of the affine subspace $\alpha+\franp$ of $\fra$.
\end{proof}

From the previous result and the compactness of $\PM$, it follows that $\Fa(\overline{\A\cdot\nu})=\overline{\Fa(\A\cdot\nu)}$ is a compact convex subset of $\fra$.
Moreover, if we denote by
\[
\PM^\A \coloneqq \{\nu\in\PM\st \A\cdot\nu=\nu\}
\]
the set of $\A$-fixed measures, then we have the
\begin{proposition}
$\Fa\left(\overline{\A\cdot\nu}\right)$ is the convex envelope of $\Fa(\overline{\A\cdot\nu}\cap\PM^\A)$.
\end{proposition}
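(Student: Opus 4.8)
The plan is to use the support-function characterisation of compact convex sets together with an induction on $k\coloneqq\dim\fra$. Write $C\coloneqq\Fa(\overline{\A\cdot\nu})$, which we already know to be compact and convex, and put $K\coloneqq\overline{\A\cdot\nu}\cap\PM^\A$. Since $K$ is a closed subset of the compact set $\overline{\A\cdot\nu}$ it is compact, so $\Fa(K)$ and hence its convex hull $\mathrm{conv}(\Fa(K))$ are compact; moreover $K\subseteq\overline{\A\cdot\nu}$ and the convexity of $C$ give the trivial inclusion $\mathrm{conv}(\Fa(K))\subseteq C$. Two compact convex subsets of $\fra$ coincide as soon as their support functions agree, so it suffices to prove that for every $\beta\in\fra$ the maximum of $\Fa^\beta(\mu)\coloneqq\langle\Fa(\mu),\beta\rangle$ over $\overline{\A\cdot\nu}$ is attained at some $\A$-fixed measure; I will call this statement $(\ast)$ and prove it by induction on $k$, the case $k=0$ being trivial since then $\nu$ itself is fixed.

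For the inductive step fix $\beta\ne 0$ (the case $\beta=0$ follows once $K\ne\emptyset$ is known, by choosing any nonzero auxiliary direction). Let $\mu_0\in\overline{\A\cdot\nu}$ realise the maximum $m$ of $\Fa^\beta$. Because $\overline{\A\cdot\nu}$ is $\A$-invariant, for each $\gamma\in\fra$ the curve $t\mapsto\exp(t\gamma)\cdot\mu_0$ stays in it, so $t=0$ is a global maximum of $t\mapsto\Fa^\beta(\exp(t\gamma)\cdot\mu_0)$. Since $\langle\mu_\fra,\beta\rangle=\mu_\frp^\beta$ for $\beta\in\fra$, differentiating under the integral sign and using $\mathrm{grad}\,\mu_\frp^\beta=\beta_M$ yields
\[
0=\dtz\Fa^\beta(\exp(t\gamma)\cdot\mu_0)=\int_M d\mu_\frp^\beta(\gamma_M)\,d\mu_0=\int_M g(\beta_M,\gamma_M)\,d\mu_0,
\]
where $g=\omega(\cdot,J\cdot)$. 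Taking $\gamma=\beta$ gives $\int_M\|\beta_M\|^2\,d\mu_0=0$, so $\beta_M$ vanishes $\mu_0$-almost everywhere and $\mu_0$ is supported on the critical set $N\coloneqq\{x\in M\st\beta_M(x)=0\}$ of the Morse--Bott function $\mu_\frp^\beta$. Thus $N=\bigsqcup_i N_i$ is a finite disjoint union of closed connected submanifolds with $\mu_\frp^\beta\equiv c_i$ on $N_i$; moreover $N=M^{\exp(\R\beta)}$ is $\A$-invariant and $\exp(\R\beta)$ acts trivially on it, as $\A$ is Abelian.

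The key point is that the maximisation now reduces to a genuinely lower-dimensional one on $N$. Put $\fra'\coloneqq(\R\beta)^\perp\cap\fra$ and $\A'\coloneqq\exp(\fra')$, so $\dim\fra'=k-1$; since $\exp(\R\beta)$ fixes $N$ pointwise, the $\A$- and $\A'$-orbits in $\PR(N)$ coincide and $\overline{\A\cdot\mu_0}=\overline{\A'\cdot\mu_0}$. Each $N_i$ is open and closed in $N$ and $\A'$-invariant, so $\mu\mapsto\mu(N_i)$ is continuous and $\A'$-invariant, hence constant on $\overline{\A'\cdot\mu_0}$; therefore $\Fa^\beta(\mu)=\sum_i c_i\,\mu(N_i)=\sum_i c_i\,\mu_0(N_i)=m$ for every $\mu\in\overline{\A\cdot\mu_0}$. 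Applying the inductive hypothesis $(\ast)$ to the configuration $(N,\A',\mu_0)$ — whose associated gradient map is the restriction $\pi_{\fra'}\circ\mu_\frp|_N$ — furnishes an $\A'$-fixed measure $\mu_*\in\overline{\A'\cdot\mu_0}$; since $\exp(\R\beta)$ also fixes $\mu_*$, it is $\A$-fixed, lies in $\overline{\A\cdot\nu}$, and satisfies $\Fa^\beta(\mu_*)=m$. This proves $(\ast)$, shows in particular $K\ne\emptyset$, and hence yields the equality of support functions, completing the argument.

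I expect the delicate point to be this reduction step: one must check that along $\overline{\A\cdot\mu_0}$ the component of $\Fa$ in the direction $\beta$ is frozen at $m$ — so that the residual variation is controlled purely by the $(k-1)$-dimensional group $\A'$ acting on $N$ — and that a fixed measure for the reduced problem is automatically $\A$-fixed and still maximises $\Fa^\beta$ for the original one. The variational identity forcing $\beta_M=0$ $\mu_0$-almost everywhere, together with Lemma \ref{lemmanae}, is exactly what collapses the support of $\mu_0$ onto $N$ and drives the induction.
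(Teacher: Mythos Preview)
Your argument is correct, but the paper proves the proposition by an entirely different and much shorter route. The paper argues via extreme points rather than support functions: by Krein--Milman it is enough to show that every extreme point $\beta$ of the compact convex set $\Fa(\overline{\A\cdot\nu})$ is the image of an $\A$-fixed measure. Choosing $\widetilde\nu\in\overline{\A\cdot\nu}$ with $\Fa(\widetilde\nu)=\beta$ and applying Theorem~\ref{ThmAb} \emph{to $\widetilde\nu$}, the image $\Fa(\A\cdot\widetilde\nu)$ is a relatively open convex set in an affine subspace with direction $\fra_{\widetilde\nu}^\perp$; since $\beta$ lies in this set and is extreme in $\Fa(\overline{\A\cdot\nu})$, that affine subspace must be a point, forcing $\fra_{\widetilde\nu}=\fra$ and hence $\widetilde\nu\in\PM^\A$. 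No induction and no reduction to a critical submanifold are needed.

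The trade-off is clear. The paper's proof is a three-line corollary of Theorem~\ref{ThmAb} and therefore rests on the Kempf--Ness machinery behind Lemma~\ref{convexproj}. Your proof, by contrast, is self-contained: it uses only the gradient identity $\mathrm{grad}\,\mu_\frp^\beta=\beta_M$, Lemma~\ref{lemmanae}, and the Morse--Bott structure, never invoking Theorem~\ref{ThmAb}. The price is the induction and the bookkeeping in the reduction step (in particular, one should note that every measure in $\overline{\A\cdot\mu_0}$ is supported on $N$ by Portmanteau, so that $\mu\mapsto\mu(N_i)$ is indeed continuous along the orbit closure even though $N_i$ is not open in $M$). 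Both approaches are valid; the paper's is the natural one here precisely because Theorem~\ref{ThmAb} has just been established.
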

\begin{proof}
By \cite[Cor.~1.4.5]{Sch}, it is sufficient to show that every extremal point $\beta\in\Fa\left(\overline{\A\cdot\nu}\right)$ is the image of an $\A$-fixed measure.
Consider $\widetilde\nu\in\overline{\A\cdot\nu}$ such that $\Fa(\widetilde\nu)=\beta$.
By Theorem \ref{ThmAb}, $\Fa(\A\cdot\widetilde\nu)$ is an open convex subset of an affine subspace $\alpha+\fra_{\widetilde\nu}^\perp\subset\fra$.
Since $\beta$ is an extremal point, we have necessarily $\fra_{\widetilde\nu}^\perp=\{0\}$. Thus, $\widetilde\nu\in\PM^\A$.
\end{proof}

Let  $P \coloneqq \mu_\fra(M)$. It was proved in \cite[Sect.~5]{HeiSchu} that $P$ is a finite union of polytopes, while in \cite{BilGhiHei3}
the authors showed that its convex hull is closely related to $E(\mu_\frp)$.
Moreover, even if $P$ is not necessarily convex, there exist suitable hypothesis guaranteeing that it is a polytope.
This happens for instance if for each $\beta\in\fra$ any local maximum of the Morse-Bott function $\mu_\frp^\beta$ is a global maximum \cite{bgkn}.
Classes of manifolds satisfying this property include real flag manifolds \cite{BilGhiHei}, and real analytic submanifolds of the complex projective space \cite{bgkn}.

In the sequel, we always assume that for each $\beta \in \mathfrak a$ the function $\mu_\frp^\beta$ has a unique local maximum.
As a consequence, $P$ is a polytope, and the Morse-Bott decomposition \eqref{MBdec} of $M$ with respect to $\mu_\frp^\beta$ has a unique unstable manifold which is open and dense,
namely  $W_r$, while the remaining unstable manifolds are submanifolds of positive codimension.

\begin{definition}
Let $\mathscr{W}(M,\A)$ denote the set of probability measures on $M$ for which the open unstable manifold $W_r$ has full measure for every $\beta\in\fra$.
\end{definition}

A typical example of probability measures belonging to $\mathscr{W}(M,\A)$ is given by {\em smooth} ones, namely those having
a smooth positive density in any chart of the manifold with respect to the Lebesgue measure of the chart  (cf.~\cite[Sect.~11.4]{Fol}).

In a similar way as in \cite[Prop.~6.8]{BilGhi}, we can prove the following
\begin{theorem}\label{AbThmP}
Let $\nu\in\mathscr{W}(M,\A)$ and assume that $\A_\nu=\{e\}$. Then, $\Fa(\A\cdot\nu)$ coincides with ${\rm int}(P)$.
\end{theorem}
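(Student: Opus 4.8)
The plan is to trap $C:=\Fa(\A\cdot\nu)$ between $\mathrm{int}(P)$ and itself by comparing support functions, in the spirit of \cite[Prop.~6.8]{BilGhi}. First I would record the easy inclusion. Since $\A_\nu=\{e\}$ we have $\fra_\nu=\{0\}$, so $\franp=\fra$, and Theorem \ref{ThmAb} together with Corollary \ref{corabthm} shows that $C$ is a nonempty \emph{open} convex subset of $\fra$ and that $\overline C=\Fa(\overline{\A\cdot\nu})$ is compact and convex. Running the argument of Lemma \ref{imgradconvhull} for the $\A$-action gives that $\Fa(\PM)$ equals the convex hull of $P=\mu_\fra(M)$; since $P$ is a polytope this convex hull is $P$ itself, whence $C\subseteq P$. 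Being open in $\fra$, $C$ is then contained in $\mathrm{int}(P)$.

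The heart of the matter is the reverse inclusion, which I would obtain by computing the support function of $\overline C$ in an arbitrary direction $\beta\in\fra$. Using the change of variables formula \eqref{changevar} and $\langle\mu_\fra(\cdot),\beta\rangle=\mu_\frp^\beta(\cdot)$ for $\beta\in\fra$,
\[
\langle\Fa(\exp(t\beta)\cdot\nu),\beta\rangle=\int_M\mu_\frp^\beta(\exp(t\beta)\cdot x)\,d\nu(x).
\]
As $\beta_M$ is the gradient of $\mu_\frp^\beta$, the curve $t\mapsto\exp(t\beta)\cdot x$ is an integral curve of this gradient, so $t\mapsto\mu_\frp^\beta(\exp(t\beta)\cdot x)$ is nondecreasing; and for every $x$ in the open dense stratum $W_r$ it converges, as $t\to+\infty$, to the global maximum $c_r=\max_{x\in M}\mu_\frp^\beta(x)=\max_{\gamma\in P}\langle\gamma,\beta\rangle$. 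Because $\nu\in\mathscr{W}(M,\A)$, the set $W_r$ has full $\nu$-measure, and since the integrand is bounded on the compact manifold $M$, dominated convergence yields
\[
\lim_{t\to+\infty}\langle\Fa(\exp(t\beta)\cdot\nu),\beta\rangle=\max_{\gamma\in P}\langle\gamma,\beta\rangle.
\]

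This computation shows that the support function of $\overline C$ dominates that of $P$ in every direction $\beta$, while $C\subseteq P$ gives the opposite inequality; hence $\overline C$ and $P$ are compact convex sets with identical support functions and therefore coincide. Since $C$ is nonempty, open and convex, $C=\mathrm{int}(\overline C)=\mathrm{int}(P)$, which is the assertion.

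The main obstacle is the convergence statement in the second step: one must argue that for $\nu$-almost every $x$ the forward orbit $\exp(t\beta)\cdot x$ accumulates only on the maximal critical set of $\mu_\frp^\beta$ — this is precisely where the hypothesis $\nu\in\mathscr{W}(M,\A)$ (full measure of $W_r$ for \emph{every} $\beta$) and the assumed uniqueness of the local maximum enter — and then to justify interchanging limit and integral. Pinning down the sign convention of the gradient flow, so that $W_r$ is genuinely the forward basin of the maximum of $\mu_\frp^\beta$, is the delicate bookkeeping point; the remainder is soft convex geometry.
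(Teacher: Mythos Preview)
Your proof is correct and follows essentially the same route as the paper's: both use the convexity and openness of $C$ from Corollary~\ref{corabthm}, the easy inclusion $C\subseteq\mathrm{int}(P)$, and the key limit $\lim_{t\to+\infty}\langle\Fa(\exp(t\beta)\cdot\nu),\beta\rangle=\max_{\gamma\in P}\langle\gamma,\beta\rangle$ (which the paper obtains by citing \cite[Cor.~54, Thm.~53]{BilZed} and you derive directly via dominated convergence on the full-measure stratum $W_r$). The only cosmetic difference is packaging: you conclude $\overline{C}=P$ directly from equality of support functions, whereas the paper argues by contradiction, producing a supporting hyperplane at a hypothetical point of $\partial C\cap\mathrm{int}(P)$ and forcing the normal direction $\beta$ to vanish.
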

\begin{proof}
For simplicity of notation, let $\Os \coloneqq \Fa(\A\cdot\nu)\subset\fra$.
We already know that $\Os\subseteq \rm{int}(P)$.
Suppose by contradiction that  $\Os$ is strictly contained in $\rm{int}(P)$.
Then, $\overline{\Os}\subset P$, since $\Os$ and $P$ are both convex. Consider $\alpha_0\in P- \overline{\Os}$, $\alpha_1\in\Os$ and the line segment
$\sigma(t) \coloneqq (1-t)\,\alpha_0+t\,\alpha_1$. Let $\ts \coloneqq {\rm inf}\{t\in[0,1]\st \sigma(t)\in\overline{\Os}\}$ and $\overline\alpha \coloneqq \sigma(\ts)$.
As $\overline{\Os}$ is closed, $\overline\alpha\in\overline{\Os}$ and $\ts\in(0,1)$.
We claim that $\overline\alpha\in\partial\Os\cap{\rm int}(P)$. Indeed, it is clear that $\overline\alpha\in\partial\Os$,
while $\overline\alpha\in {\rm int}(P)$ follows from $\alpha_1\in\Os\subset{\rm int}(P)$ and $\ts>0$.
By \cite{Sch}, every boundary point of a compact convex set lies on an exposed face, that is, it admits a support hyperplane.
Therefore, there exists $\beta\in\fra$ such that
\[
\langle\overline\alpha,\beta \rangle = \max_{\alpha\in\overline{\Os}}\langle\alpha,\beta\rangle = \sup_{\alpha\in\Os}\langle\alpha,\beta\rangle =
\sup_{\gamma\in\fra}\langle \Fa(\exp(\gamma)\cdot\nu),\beta\rangle.
\]
Since $\nu\in\mathscr{W}(M,\A)$ and $\mu_\frp^\beta=\mu_\fra^\beta$ for every $\beta\in\fra$,
it follows from \cite[Cor.~54]{BilZed} and from the proof of \cite[Thm.~53]{BilZed} that
\[
\max_{M}\mu_\fra^\beta = \lim_{t\rightarrow+\infty}\int_M\mu_\fra^\beta(\exp(t\beta)\cdot x)d\nu(x) = \lim_{t\rightarrow+\infty}\langle\Fa(\exp(t\beta)\cdot\nu),\beta\rangle.
\]
Consequently,
\[
\langle\overline\alpha,\beta \rangle = \sup_{\gamma\in\fra}\langle \Fa(\exp(\gamma)\cdot\nu),\beta\rangle \geq \max_{M}\mu_\fra^\beta = \max_{\rho\in P}\langle \rho,\beta\rangle.
\]
That being so, the linear function $\alpha\mapsto\langle\alpha,\beta\rangle$ attains it maximum on $P$ at $\overline\alpha\in{\rm int}(P)$.
Since $P$ is convex, $\beta$ must be zero, which is a contradiction.
\end{proof}

\section{Convexity properties of $\F$: general case}\label{convgencase}
The goal of this section is to prove a result similar to Theorem \ref{AbThmP} when the group acting on $\PM$ is non-Abelian.

Let $\G=\K\exp(\frp)$ be a compatible subgroup of $\U^\C$ and fix $\nu\in\PM$.
To our purpose, it is useful to consider the map \cite{BilGhi0, BilGhi, BouLiYau, Her}
\[
F_\nu:\G\rightarrow\frp,\quad F_\nu(g)\coloneqq\F(g\cdot\nu),
\]
where $\F:\PM\rightarrow\frp$ is the gradient map associated with the action of $\G$ on $\PM$.
In \cite[Thm.~6.4]{BilGhi}, the authors showed that $F_\nu$ is a smooth submersion when $\G=\U^\C$ and $\G_\nu$ is compact.
This is true for a compatible subgroup of $\U^\C$, too.
\begin{proposition}\label{Fnufibr}
If $\G_\nu$ is compact, then $F_\nu$ is a smooth submersion.
\end{proposition}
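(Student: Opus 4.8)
The plan is to realize $F_\nu$ as an integral depending smoothly on $g$, to compute its differential explicitly, and to deduce surjectivity from the positivity of an associated quadratic form on $\frp$.

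First I would put $F_\nu$ in a form amenable to differentiation. By the change of variables formula \eqref{changevar},
\[
F_\nu(g) = \F(g\cdot\nu) = \int_M \mu_\frp(g\cdot x)\,d\nu(x).
\]
The map $\G\times M\to\frp$, $(g,x)\mapsto\mu_\frp(g\cdot x)$, is smooth, being the composition of the smooth $\G$-action on $M$ with the smooth map $\mu_\frp$. Since $M$ is compact and $\nu$ is finite, all $g$-derivatives of the integrand are bounded uniformly in $x$ over compact subsets of $\G$; hence one may differentiate under the integral sign and conclude that $F_\nu$ is smooth.

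Next I would compute the differential at an arbitrary $g\in\G$. Using the curve $t\mapsto\exp(t\xi)g$ and writing $\nu'\coloneqq g\cdot\nu$, the action property $(\exp(t\xi)g)\cdot\nu=\exp(t\xi)\cdot\nu'$ together with \eqref{changevar} gives
\[
\dtz F_\nu(\exp(t\xi)g) = \int_M d\mu_\frp|_x\big(\xi_M(x)\big)\,d\nu'(x) \eqqcolon L_g(\xi),\qquad \xi\in\frg.
\]
Since right translation is a diffeomorphism, $\mathrm{image}(dF_\nu|_g)=\mathrm{image}(L_g)$, so it suffices to prove $L_g$ surjective. I would then test $L_g$ on $\frp$: for $\beta,\eta\in\frp$, recalling that $\eta_M$ is the gradient of $\mu_\frp^\eta$ for the induced Riemannian metric $\mathrm{g}_M$ on $M$,
\[
\langle L_g(\beta),\eta\rangle = \int_M d\mu_\frp^\eta|_x\big(\beta_M(x)\big)\,d\nu'(x) = \int_M \mathrm{g}_M\big(\beta_M(x),\eta_M(x)\big)\,d\nu'(x) \eqqcolon Q_g(\beta,\eta).
\]
Thus $Q_g$ is a symmetric bilinear form on $\frp$ and $L_g|_\frp$ is its associated self-adjoint operator; once $Q_g$ is shown positive definite, $L_g|_\frp\colon\frp\to\frp$ is forced to be an isomorphism, whence $L_g$ — and therefore $dF_\nu|_g$ — maps onto $\frp$.

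The hard part is the positivity of $Q_g$, and this is exactly where compactness of $\G_\nu$ enters. One has $Q_g(\beta,\beta)=\int_M|\beta_M|^2\,d\nu'\ge0$, with equality if and only if $\beta_M$ vanishes $\nu'$-almost everywhere. By Lemma \ref{lemmanae} the latter forces $\beta\in\frg_{\nu'}=\mathrm{Lie}(\G_{\nu'})$, and $\G_{\nu'}=g\G_\nu g^{-1}$ is compact. It remains to check that a compact isotropy algebra meets $\frp$ only in $0$: if $0\neq\beta\in\frp\cap\frg_{\nu'}$, then $\exp(\R\beta)\subset\G_{\nu'}$, but under the Cartan diffeomorphism $\K\times\frp\to\G$ this one-parameter subgroup corresponds to the unbounded line $\{e\}\times\R\beta$, so it is closed and noncompact, contradicting compactness of $\G_{\nu'}$. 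Alternatively, a compact $\G_{\nu'}$ lies in a conjugate $h\K h^{-1}$ of $\K$, so $\beta=\Ad(h)\zeta$ with $\zeta\in\frk$ and $B(\beta,\beta)=B(\zeta,\zeta)$; since $B$ is positive definite on $\frp$ and negative definite on $\frk$, this again yields $\beta=0$. Hence $Q_g(\beta,\beta)=0$ only for $\beta=0$, so $Q_g$ is positive definite and $dF_\nu|_g$ is onto $\frp$ at every $g$, establishing that $F_\nu$ is a submersion.
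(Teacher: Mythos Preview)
Your argument is correct and follows essentially the same route as the paper: differentiate $F_\nu$ along curves $t\mapsto\exp(t\beta)g$ with $\beta\in\frp$, pair with $\beta$ to obtain $\int_M\|\beta_M\|^2\,d\nu'$, invoke Lemma~\ref{lemmanae} to place $\exp(\R\beta)$ inside the compact group $g\G_\nu g^{-1}$, and conclude $\beta=0$, so that $dF_\nu|_g$ is injective on $dR_g(e)(\frp)$ and hence surjective by dimension. Your framing via the symmetric form $Q_g$ and your explicit justifications (smoothness of $F_\nu$, the Cartan-decomposition or $B$-signature argument for $\frp\cap\frg_{\nu'}=\{0\}$) add detail the paper leaves implicit, but the strategy is the same.
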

\begin{proof}
We have to prove that the pushforward $dF_\nu(g):T_g\G\rightarrow\frp$ of $F_\nu$ is surjective for every $g\in\G$.
Let us consider the curve $\sigma(t)\coloneqq\exp(t\beta)\cdot g$ in $\G$, where $\beta\in\frp$. Using the change of variables formula \eqref{changevar}, we can write
\[
F_\nu(\sigma(t))=\int_M\mu_\frp(\exp(t\beta)\cdot x)d\widetilde\nu(x),
\]
where $\widetilde\nu \coloneqq g\cdot\nu\in\PM$. Suppose that $dF_\nu(g)(\dot\sigma(0)) =0$. Then, denoted by $\left\|\cdot\right\|$ the Riemannian norm on $M,$ we have
\[
0 = \langle dF_\nu(\dot\sigma(0)),\beta\rangle = \int_M\dtz\mu_\frp^\beta(\exp(t\beta)\cdot x)\,d\widetilde\nu(x) = \int_M\left\| \beta_M\right\|^2(x)\,d\widetilde\nu(x),
\]
since ${\rm grad}(\mu_\frp^\beta)=\beta_M.$
Therefore, $\beta_M$ vanishes $\widetilde\nu$-almost everywhere. By Lemma \ref{lemmanae}, $\exp(\R\beta)$ is contained in $\G_{\widetilde\nu}=g\,\G_\nu g^{-1},$
which is compact. Thus, $\beta=0$. We can conclude that $dF_\nu(g)$ is injective on the subspace $dR_g(e)(\frp)$ of $T_g\G$, $R_g$ being the right translation on $\G$.
By dimension reasons, $dF_\nu(g)$ is surjective.
\end{proof}

As in the previous section, whenever $\fra\subset\frp$ is a maximal Abelian subalgebra of $\frg$ with corresponding Abelian Lie group $\A \coloneqq \exp(\fra)$,
we assume that the Morse-Bott function $\mu_\frp^\beta$ has a unique local maximum for every $\beta\in\fra$.
In the non-Abelian case, we can exploit the so-called $\K\A\K$ {\em decomposition} of $\G$ (cf.~\cite[Thm.~7.39]{Kna}) to show the following.
\begin{theorem}\label{ThmNAb}
Let $\nu\in\PM$ be a probability measure which is absolutely continuous with respect to a $\K$-invariant smooth probability measure $\nu_0\in\PM$
and assume that $0$ belongs to the interior $\Omega(\mu_\frp)$ of $E(\mu_\frp)$ in $\frp$.
Then, $\F(\G\cdot\nu) = \Omega(\mu_\frp)$ and $F_\nu:\G\rightarrow\Omega(\mu_\frp)$ is a smooth fibration with compact connected fibres diffeomorphic to $\K$.
\end{theorem}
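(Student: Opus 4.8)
The plan is to establish the statement in three stages: first that the isotropy $\G_\nu$ is compact, so that Proposition \ref{Fnufibr} applies and $F_\nu$ is a submersion; then that $F_\nu$ is proper as a map into $\Omega(\mu_\frp)$, which by Ehresmann's theorem upgrades it to a locally trivial fibration and simultaneously forces the image to be all of $\Omega(\mu_\frp)$; and finally that one fibre is diffeomorphic to $\K$. Throughout I will use the $\K$-equivariance $F_\nu(kg)=\Ad(k)F_\nu(g)$ for $k\in\K$ (immediate from the $\K$-equivariance of $\F$), the fact that $E(\mu_\frp)$ and its interior $\Omega(\mu_\frp)$ are $\Ad(\K)$-invariant (since $\mu_\frp$ is $\K$-equivariant), and the following consequence of the hypothesis: as $\nu\ll\nu_0$ with $\nu_0$ smooth, $\nu$ assigns zero mass to every closed submanifold of positive codimension; in particular $\nu\in\mathscr{W}(M,\A)$ for every maximal Abelian $\fra\subset\frp$.

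For compactness of $\G_\nu$ I would fix $\beta\in\frp\setminus\{0\}$. Because $0\in\Omega(\mu_\frp)$, the linear functional $\langle\cdot,\beta\rangle$ is nonconstant on $E(\mu_\frp)$, so $\mu_\frp^\beta$ is a nonconstant Morse--Bott function whose critical set has positive codimension, hence $\nu$-measure zero. Thus $\int_M\|\beta_M\|^2\mathrm{d}\nu>0$, and the computation in the proof of Proposition \ref{Fnufibr} gives $\dtz\langle\F(\exp(t\beta)\cdot\nu),\beta\rangle>0$; more precisely, by \cite[Cor.~54]{BilZed} and the proof of \cite[Thm.~53]{BilZed} the map $t\mapsto\langle\F(\exp(t\beta)\cdot\nu),\beta\rangle$ is nondecreasing and tends to $\max_M\mu_\frp^\beta=\max_{E(\mu_\frp)}\langle\cdot,\beta\rangle>0$. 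This makes the associated Kempf--Ness function proper along every geodesic ray of $\G/\K$, so $\nu$ is stable and $\G_\nu$ is compact (cf.~\cite[Cor.~3.5]{BilGhi} and \cite{BilZed}). Then $F_\nu$ is a submersion, hence has open image; combined with $\F(\G\cdot\nu)\subseteq E(\mu_\frp)$ from Lemma \ref{imgradconvhull}, the image lies in $\Omega(\mu_\frp)$.

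For properness I would use the $\K\A\K$ decomposition $g=k\exp(\beta)k'$, with $\beta$ in a closed Weyl chamber of a fixed maximal Abelian $\fra\subset\frp$. Suppose $F_\nu(g_n)$ lies in a compact set $C\subset\Omega(\mu_\frp)$ while $|\beta_n|\to\infty$; after passing to a subsequence, $k_n\to k$, $k_n'\to k'$ and $\beta_n/|\beta_n|\to\beta_*$ with $|\beta_*|=1$. Here the technical result of Appendix \ref{apdx} is the crux: it supplies the uniform control needed to pass to the limit jointly in the escaping parameter $\exp(\beta_n)$ and in the measures $k_n'\cdot\nu\to k'\cdot\nu$ (all lying in $\mathscr{W}(M,\A)$), yielding $\langle\F(\exp(\beta_n)\cdot(k_n'\cdot\nu)),\beta_*\rangle\to\max_{E(\mu_\frp)}\langle\cdot,\beta_*\rangle$. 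Using $\K$-equivariance and the $\Ad(\K)$-invariance of $E(\mu_\frp)$, any limit $w$ of $F_\nu(g_n)$ then satisfies $\langle w,\Ad(k)\beta_*\rangle=\max_{E(\mu_\frp)}\langle\cdot,\Ad(k)\beta_*\rangle$, i.e.\ $w\in\partial E(\mu_\frp)$, contradicting $w\in C\subset\Omega(\mu_\frp)$. Hence $|\beta_n|$ stays bounded, $F_\nu^{-1}(C)$ is compact, and $F_\nu$ is proper. A proper submersion into the connected manifold $\Omega(\mu_\frp)$ has image both open and closed, so $\F(\G\cdot\nu)=\Omega(\mu_\frp)$, and it is a locally trivial fibration by Ehresmann's theorem. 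I expect this uniform boundary-escape estimate --- precisely the content of Appendix \ref{apdx} --- to be the main obstacle, the real case being more delicate than the complex one treated in \cite{BilGhi}.

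It remains to identify a fibre. Since $0\in\Omega(\mu_\frp)$ lies in the image, I would choose $g_0$ with $\F(g_0\cdot\nu)=0$ and set $\nu':=g_0\cdot\nu$, which is still absolutely continuous with respect to the smooth measure $(g_0)_*\nu_0$; then $F_\nu^{-1}(0)=\big(F_{\nu'}^{-1}(0)\big)\,g_0$, so it suffices to prove $F_{\nu'}^{-1}(0)=\K$. The inclusion $\K\subseteq F_{\nu'}^{-1}(0)$ is $\K$-equivariance. Conversely, if $h=k\exp(\beta)\in F_{\nu'}^{-1}(0)$ then $\F(\exp(\beta)\cdot\nu')=0$, so the function $\phi(t):=\langle\F(\exp(t\beta)\cdot\nu'),\beta\rangle$ is nondecreasing (its derivative is $\int_M\|\beta_M\|^2\mathrm{d}(\exp(t\beta)\cdot\nu')\ge0$) and vanishes at $t=0$ and $t=1$; hence $\phi\equiv0$ on $[0,1]$, forcing $\beta_M=0$ $\nu'$-almost everywhere and therefore $\beta=0$ by the density argument of the first stage. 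Thus $h\in\K$ and $F_{\nu'}^{-1}(0)=\K$, which is compact and connected since $\K$ is the maximal compact subgroup of the connected group $\G$. As $F_\nu$ is a fibration over the connected base $\Omega(\mu_\frp)$, every fibre is diffeomorphic to $\K$, which completes the proof.
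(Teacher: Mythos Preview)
Your overall architecture matches the paper's, and Stages~1 and~3 are sound; in fact your Stage~3 is more explicit than the paper's terse dimension count (the paper just observes that the connected fibre $F_\nu^{-1}(0)$ carries a free left $\K$-action and has dimension $\dim\K$, hence is a single $\K$-orbit).

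The genuine gap is in Stage~2. You have misread Appendix~\ref{apdx}: Proposition~\ref{propapdx} supplies \emph{no} uniform boundary-escape estimate. Its sole purpose is to show that the hypothesis $0\in\Omega(\mu_\frp)$ is not restrictive, by passing to a compatible subgroup $\G'\subset\G$ and shifting the moment map. It plays no role inside the properness argument itself, and the joint limit $\langle\F(\exp(\beta_n)\cdot(k_n'\cdot\nu)),\beta_*\rangle\to\max_{E(\mu_\frp)}\langle\cdot,\beta_*\rangle$ with simultaneously varying direction $\beta_n$ and measure $k_n'\cdot\nu$ is not established anywhere in the paper.

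The paper proves properness by a different, cleaner route that avoids any boundary-escape argument. After writing $g_n=k_n\exp(\alpha_n)l_n^{-1}$ and passing to subsequences $k_n\to k$, $l_n\to l$, it first eliminates the variation in $l_n$ via the elementary bound
\[
\bigl|F_\nu(\exp(\alpha_n)l_n^{-1})-F_\nu(\exp(\alpha_n)l^{-1})\bigr|\;\le\;\sup_M|\mu_\frp|\,\bigl\|l_n^{-1}\cdot\nu-l^{-1}\cdot\nu\bigr\|,
\]
and then invokes \cite[Lemma~6.11]{BilGhi} to get norm convergence $l_n^{-1}\cdot\nu\to l^{-1}\cdot\nu$; this is precisely where absolute continuity with respect to a \emph{$\K$-invariant} smooth $\nu_0$ is used, a point your sketch does not exploit. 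One then knows that $F_\nu(l\exp(\alpha_n)l^{-1})=\Ad(l)F_\nu(\exp(\alpha_n)l^{-1})$ converges in $\Omega(\mu_\frp)$, with $l\exp(\alpha_n)l^{-1}$ lying in the fixed Abelian compatible group $\A':=l\A l^{-1}$. Projecting to $\fra'$ and applying the Abelian results of Section~\ref{sectabres} (since $0\in\mathrm{int}(P)$ forces $\fra'_\nu=\{0\}$, Corollary~\ref{corabthm} and Theorem~\ref{AbThmP} give that $F_\nu^{\A'}:\A'\to\mathrm{int}(P)$ is a diffeomorphism), convergence of the image yields convergence of $\{l\exp(\alpha_n)l^{-1}\}$, hence of $\{\exp(\alpha_n)\}$ and of $\{g_n\}$. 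So the properness step is a reduction to the Abelian diffeomorphism already proved, not a separate analytic estimate.
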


Before proving the theorem, we make some remarks on its content.
First, we observe that the hypothesis on $\nu$ is satisfied by smooth probability measures, which constitute a dense subset of $\PM$
(see for instance \cite{DunSch}).
Moreover, it guarantees that whenever $\{k_n\}$ is a sequence in $\K$ converging to some $k\in\K$, then
the sequence $\{k_n\cdot\nu\}\subset\PM$ converges to $k\cdot\nu$ in the norm
\[
\left\|\nu\right\| \coloneqq \sup\left\{\int_M hd\nu\st h\in\CM,~\sup_M |h|\leq1 \right\},
\]
by \cite[Lemma 6.11]{BilGhi}.
Finally, we underline that the assumption $0\in\Omega(\mu_\frp)$ is not restrictive, as
such condition is always satisfied up to replace $\G$ with a compatible group $\G'=\K'\exp(\frp')$ such that
$\mu_{\frp'}(M)=\mu_\frp(M)$ and up to shift $\mu_{\frp'}$.
We will show this assertion in Proposition \ref{propapdx} of Appendix \ref{apdx}, since most of its proof is rather technical.

\begin{proof}[Proof of Theorem \ref{ThmNAb}]
First of all, notice that $\nu\in\mathscr{W}(M,\A)$ for any $\fra\subset\frp$, since it is absolutely continuous with respect to the smooth probability measure $\nu_0$.
As $0\in\Omega(\mu_\frp)$, for every $\beta\in\frp$ the function $\mu_\frp^\beta$ has a strictly positive maximum.
This implies that $\nu$ is stable (cf.~\cite[Cor.~56]{BilZed}). Thus, $\G_\nu$ is compact.
Now, by Proposition \ref{Fnufibr}, $F_\nu:\G\rightarrow \frp$ is a smooth submersion.
In particular, its image is an open subset of $\frp$ contained in $E(\mu_\frp)$.
Therefore, $F_\nu(\G)\subseteq\Omega(\mu_\frp)$ and we can regard $F_\nu$ as a map $F_\nu:\G\rightarrow\Omega(\mu_\frp)$. We claim that such map is proper.
Let $\{g_n\}$ be a sequence in $\G$ such that $\{F_\nu(g_n)\}$ converges to a point of $\Omega(\mu_\frp)$. We need to show that there exists a convergent subsequence of $\{g_n\}$.
Let $\fra\subset\frp$ be a maximal Abelian subalgebra of $\frg$ and set $\A \coloneqq \exp(\fra)$. By the $\K\A\K$-decomposition of $\G$, every $g_n\in\G$ can be written as
$k_n\exp(\alpha_n)\,l_n^{-1},$ where $k_n,l_n\in\K$ and $\alpha_n\in\fra.$
Passing to subsequences, we have that $k_n\rightarrow k$ and $l_n\rightarrow l$, for some $k,l\in\K$.
Since $F_\nu$ is $\K$-equivariant, it follows that the sequence $\{F_\nu(\exp(\alpha_n)\,l_n^{-1})\}$ is convergent in $\Omega(\mu_\frp)$.
A computation similar to \cite[p.~1139]{BilGhi} gives
\[
\left|F_\nu(\exp(\alpha_n)\,l_n^{-1})-F_\nu(\exp(\alpha_n)\,l^{-1})\right|\leq\sup_M|\mu_\frp|\left\|l_n^{-1}\cdot\nu-l^{-1}\cdot\nu\right\|.
\]
Then, by the hypothesis on $\nu$, we get $F_\nu(\exp(\alpha_n)\,l_n^{-1})-F_\nu(\exp(\alpha_n)\,l^{-1})\rightarrow 0$.
Therefore, the sequence $\{F_\nu(\exp(\alpha_n)\,l^{-1})\}$ is convergent in $\Omega(\mu_\frp)$, too.
Consequently,  $\{F_\nu(l\exp(\alpha_n)\,l^{-1})\}$ converges to some point of $\Omega(\mu_\frp)$, being $F_\nu(l\exp(\alpha_n)\,l^{-1}) = \Ad(l)F_\nu(\exp(\alpha_n)\,l^{-1})$.
The points $l\exp(\alpha_n)\,l^{-1}$ belong to the Abelian group $\A' \coloneqq l\A l^{-1}$, which is compatible.
The $\A'$-gradient map is $\pi_{\fra'}\circ\mu_\frp$, where $\pi_{\fra'}:\frp\rightarrow\fra'$ is the orthogonal projection onto the Lie algebra $\fra'$ of $\A'$.
Denote by $P\coloneqq\mu_{\fra'}(M)$ the image of $\mu_{\fra'}$. $P=\pi_{\fra'}(\mu_\frp(M))$ is a polytope and $\pi_{\fra'}(\Omega(\mu_\frp))\subset\mbox{int}(P)$.
Observe that $0\in\mbox{int}(P)$. This implies that $\nu$ is stable with respect to $\A'$. Thus, $\fra'_\nu=\{0\}$ by \cite[Lemma 21]{BilZed}.
Hence, by the results of $\S$\ref{sectabres},
$\F_{\fra'}(\A'\cdot\nu) = \mbox{int}(P)$ and the map $F_\nu^{\A'}:\A'\rightarrow\fra'$, $F_\nu^{\A'}(a) = \F_{\fra'}(a\cdot\nu)$, is a diffeomorphism onto $\mbox{int}(P)$.
Since $\{l\exp(\alpha_n)\,l^{-1}\}\subset\A'$ and $\{\pi_{\fra'}(F_\nu(l\exp(\alpha_n)l^{-1}))=F_\nu^{\A'}(l\exp(\alpha_n)\,l^{-1})\}$
converges to some point of $\mbox{int}(P)$, the sequences $\{l\exp(\alpha_n)l^{-1}\}\subset\A'$
and $\{\exp(\alpha_n)\}\subset\A$ admit convergent subsequences. The claim is then proved.
As a consequence, $F_\nu:\G\rightarrow\Omega(\mu_\frp)$ is a closed map. Since it is also open, it is surjective.
In particular, it is a locally trivial fibration by Ehresmann theorem \cite{Ehr}.
As the base $\Omega(\mu_\frp)$ is contractible, $\G$ is diffeomorphic to $\Omega(\mu_\frp)\times \Fg$,
where $\Fg$ denotes the fibre. Hence, $\Fg$ is connected. Moreover, $F_\nu^{-1}(0)$ is a $\K$-orbit, since $0\in\Omega(\mu_\frp)$ and $F_\nu$ is $\K$-equivariant.
Therefore, $\Fg$ is diffeomorphic to $\K$.
\end{proof}

\begin{corollary}\label{cordescdiffeo}
If $\nu\in\PM$ is a $\K$-invariant smooth probability measure on $M$ and $0\in\Omega(\mu_\frp)$, then
$F_\nu$ descends to a diffeomorphism $\overline{F}_\nu:\G/\K\rightarrow\Omega(\mu_\frp)$.
\end{corollary}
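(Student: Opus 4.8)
The plan is to reduce everything to Theorem \ref{ThmNAb}, whose hypotheses hold here: a $\K$-invariant smooth probability measure $\nu$ is in particular absolutely continuous with respect to itself, so the theorem applies and gives that $F_\nu:\G\rightarrow\Omega(\mu_\frp)$ is a surjective smooth fibration with compact connected fibres diffeomorphic to $\K$. I would first check that $F_\nu$ descends to the quotient. Since $\F$ is $\K$-equivariant and $\nu$ is $\K$-invariant, for every $g\in\G$ and $k\in\K$ we have $F_\nu(gk)=\F((gk)\cdot\nu)=\F(g\cdot(k\cdot\nu))=\F(g\cdot\nu)=F_\nu(g)$, so $F_\nu$ is right $\K$-invariant and factors as $F_\nu=\overline{F}_\nu\circ\pi$, where $\pi:\G\rightarrow\G/\K$ is the canonical projection and $\overline{F}_\nu:\G/\K\rightarrow\Omega(\mu_\frp)$ is well defined. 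Smoothness of $\overline{F}_\nu$ is then immediate, as $\pi$ is a smooth submersion and $F_\nu$ is smooth; surjectivity of $\overline{F}_\nu$ follows at once from the surjectivity of $F_\nu$.

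The crux is injectivity, which amounts to showing that each fibre of $F_\nu$ is a single right coset $g\K$. By right $\K$-invariance every fibre is a union of such cosets, so I would fix a fibre $\Fg_c\coloneqq F_\nu^{-1}(c)$ with $c\in\Omega(\mu_\frp)$ and a point $g\in\Fg_c$. The coset $g\K=L_g(\K)$ is closed in $\G$ (because $\K$ is compact), hence closed in $\Fg_c$; moreover $g\K$ and $\Fg_c$ are both submanifolds of dimension $\dim\K$ (the fibre is diffeomorphic to $\K$ by Theorem \ref{ThmNAb}, and $g\K$ is a diffeomorphic copy of $\K$), so $g\K$ is open in $\Fg_c$. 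As $\Fg_c$ is connected, the nonempty clopen subset $g\K$ must coincide with $\Fg_c$. This shows that $F_\nu(g_1)=F_\nu(g_2)$ forces $g_1\K=g_2\K$, i.e.\ that $\overline{F}_\nu$ is injective.

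Finally I would upgrade the smooth bijection $\overline{F}_\nu$ to a diffeomorphism by verifying that it is an everywhere local diffeomorphism. Since $F_\nu$ is a submersion, $dF_\nu(g):T_g\G\rightarrow\frp$ is surjective, and its kernel is the tangent space to the fibre $g\K$ at $g$, which coincides with $\ker d\pi(g)$. Differentiating $F_\nu=\overline{F}_\nu\circ\pi$ and using that $d\pi(g)$ is surjective, we conclude that $d\overline{F}_\nu(g\K):T_{g\K}(\G/\K)\rightarrow\frp$ is surjective; since $\dim(\G/\K)=\dim\frp=\dim\Omega(\mu_\frp)$ (recall $\frg=\frk\oplus\frp$), it is an isomorphism. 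Thus $\overline{F}_\nu$ is a smooth bijection with nonsingular differential at every point, hence a diffeomorphism onto $\Omega(\mu_\frp)$.

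I expect the only genuinely delicate point to be the clopen argument showing that each fibre is exactly one coset; once the connectedness and the dimension matching from Theorem \ref{ThmNAb} are in hand, the descent, the differential computation and the local-diffeomorphism conclusion are routine bookkeeping.
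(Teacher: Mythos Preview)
Your argument is correct, and the descent step is identical to the paper's. The difference lies in how you conclude that $\overline{F}_\nu$ is a diffeomorphism. The paper observes that $\overline{F}_\nu$ inherits properness from $F_\nu$ (since $\overline{F}_\nu^{-1}(C)=\pi(F_\nu^{-1}(C))$ for any compact $C$) and is a local diffeomorphism by the same differential computation you carry out; a proper local diffeomorphism is a covering map, and since $\Omega(\mu_\frp)$ is contractible (hence simply connected) and $\G/\K$ is connected, the covering is trivial. You instead prove injectivity directly by a clopen argument inside each fibre, using that Theorem~\ref{ThmNAb} already tells you the fibre is connected and of dimension $\dim\K$, so the single coset $g\K$ must exhaust it. Your route is slightly more elementary in that it avoids covering-space theory, while the paper's route is shorter and exploits the contractibility of $\Omega(\mu_\frp)$ in a more conceptual way; both rest on exactly the same inputs from Theorem~\ref{ThmNAb}.
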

\begin{proof}
Since $\nu$ is $\K$-invariant, for every $g\in\G$ and $k\in \K$ we have $F_\nu(gk) = \F(gk\cdot\nu) = \F(g\cdot\nu) = F_\nu(g)$. Thus,
$F_\nu$ descends to a map $\overline{F}_\nu:\G/\K\rightarrow\Omega(\mu_\frp)$.
By Theorem \ref{ThmNAb}, $\overline{F}_\nu$ is a proper map and a local diffeomorphism. Thus, it is a covering map.
As $\Omega(\mu_\frp)$ is contractible, $\overline{F}_\nu:\G/\K\rightarrow\Omega(\mu_\frp)$ is a diffeomorphism.
\end{proof}

\begin{remark}
The above corollary may be regarded as an analogue of a classical result by Kor\'anyi \cite{Kor}. 
Indeed, it suggests that when $M$ is an adjoint orbit and $\nu$ is a $\K$-invariant probability measure, then a potential compactification of $\G/\K$ 
is given by the convex hull of $M.$
\end{remark}


\appendix

\section{}\label{apdx}
Let $\U$ be a compact connected Lie group acting in a Hamiltonian fashion on a compact K\"ahler manifold $(Z,J,\omega)$ with momentum mapping $\mu:Z\rightarrow\fru$,
and assume that the action of $\U^\C$ on $Z$ is holomorphic. As mentioned in $\S$\ref{convgencase}, we are going to show the following result.
\begin{proposition}\label{propapdx}
Let $\G=\K\exp(\frp)$ be a compatible subgroup of $\U^\C$. Consider a $\G$-stable submanifold $M$ of $Z$ and let $\mu_\frp:M\rightarrow\frp$
be the $\G$-gradient map associated with $\mu$. Then
\begin{enumerate}[{\rm i)}]
\item\label{propapdxi} there exists a subgroup $\G'=\K'\exp(\frp')\subset\G$ compatible with $\U^\C$ such that the interior of $\mu_{\frp'}(M)$ is nonempty in $\frp'$ and
$\mu_{\frp'}(M)=\mu_\frp(M)$;
\item\label{propapdxii}  up to shift $\mu_{\frp'}$, $0\in \Omega(\mu_{\frp'})$.
\end{enumerate}
\end{proposition}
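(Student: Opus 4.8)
The plan is to shrink $\frp$ to the directions along which $\mu_\frp(M)$ is genuinely non-degenerate. I would set
\[
\frq\coloneqq\{\beta\in\frp\st \mu_\frp^\beta\text{ is constant on }M\},
\]
the subspace of ``flat'' directions, and let $\frp'\coloneqq\frq^\perp$ be its orthogonal complement in $\frp$ with respect to $B|_{\frp\times\frp}$. By the very definition of $\frq$, the affine hull of $\mu_\frp(M)$ has direction exactly $\frp'$, so $\mu_\frp(M)$ affinely spans $\frp'$; this is what will ultimately yield the nonempty interior. The candidate group is $\G'\coloneqq\K\exp(\frp')$, obtained by keeping all of $\K$ and replacing $\frp$ by $\frp'$.

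The first step is to verify that $\G'$ is a compatible subgroup of $\U^\C$ contained in $\G$. The structural heart of the argument is the $\Ad(\K)$-stability of $\frq$: for $k\in\K$ and $\beta\in\frq$, equivariance of $\mu_\frp$ and $\Ad(\K)$-invariance of $\langle\cdot,\cdot\rangle$ give $\mu_\frp^{\Ad(k)\beta}(x)=\mu_\frp^\beta(k^{-1}\cdot x)$, which is again constant, whence $\Ad(k)\beta\in\frq$. Consequently $\frp'=\frq^\perp$ is $\Ad(\K)$-stable as well, and $\frg'\coloneqq\frk\oplus\frp'$ satisfies $[\frk,\frk]\subset\frk$, $[\frk,\frp']\subset\frp'$ and $[\frp',\frp']\subset[\frp,\frp]\subset\frk$. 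Since $\frg'\cap\fru=\frk$ and $\frg'\cap i\fru=\frp'$, the group $\G'=\K\exp(\frp')$ is compatible and sits inside $\G$.

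For the first assertion I would then identify the gradient map of $\G'$ as $\mu_{\frp'}=\pi_{\frp'}\circ\mu_\frp$, so that $\mu_\frp=\alpha_0+\mu_{\frp'}$ with $\alpha_0\coloneqq\pi_\frq\circ\mu_\frp$ constant on $M$. A short computation, again using equivariance together with the $\Ad(\K)$-stability of $\frq$, shows $\Ad(k)\alpha_0=\alpha_0$, i.e. $\alpha_0$ is $\K$-invariant; hence $\mu_\frp-\alpha_0$ is still an admissible $\G$-gradient map, and up to this harmless $\K$-invariant translation its image lies in $\frp'$ and equals $\mu_{\frp'}(M)$. As $\mu_{\frp'}(M)$ affinely spans $\frp'$, its convex hull has nonempty interior $\Omega(\mu_{\frp'})$ in $\frp'$, which proves the first assertion. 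For the second, I would observe that $E(\mu_{\frp'})=\mathrm{conv}(\mu_{\frp'}(M))$ is $\Ad(\K)$-invariant; averaging any point of $\Omega(\mu_{\frp'})$ over $\K$ with respect to Haar measure then produces a $\K$-fixed vector $c\in\Omega(\mu_{\frp'})$. Since $c$ is $\K$-invariant, $\mu_{\frp'}-c$ is again a $\G'$-gradient map and satisfies $0\in\Omega(\mu_{\frp'}-c)$, giving the second assertion.

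The main obstacle to watch is that a gradient map may be translated only by a $\K$-invariant constant without ceasing to be a gradient map; an arbitrary shift destroys $\K$-equivariance. Thus the crux is precisely that the two translations the argument requires — the constant $\frq$-component $\alpha_0$ and the centering vector $c$ — are $\K$-invariant. The first is $\Ad(\K)$-invariant because $\frq$ is, and the second exists because $E(\mu_{\frp'})$ is an $\Ad(\K)$-invariant convex body and $\K$ is compact, so the Haar average of an interior point is a $\K$-fixed interior point. Once the $\Ad(\K)$-stability of $\frq$ is in hand, the remainder is elementary convex geometry.
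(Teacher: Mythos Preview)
Your argument is essentially correct but follows a genuinely different route from the paper's. Two structural choices set it apart. First, you keep $\K'=\K$ and take $\frg'=\frk\oplus\frp'$, whereas the paper builds $\G'$ from the smaller algebra $[\frp',\frp']\oplus\frp'$ and sets $\K'=\overline{\exp([\frp',\frp'])}$, invoking a structural result from \cite{BilGhiHei3} that $\frh=[\frp',\frp']\oplus\frp'$ and $\frh^\perp$ are compatible commuting ideals of $\frg$. Second, you shift at the level of the gradient map, arguing (correctly) that a $\K$-invariant translation preserves both $\K$-equivariance and the gradient condition $\beta_M=\mathrm{grad}\,\mu_\frp^\beta$. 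The paper instead insists on shifting the momentum map $\mu$ itself; since that requires the shift $i\beta_0$ to lie in the center of $\fru$, the authors first replace $\U$ by a smaller compact group $\U''$ for which every $\K$-fixed $\beta_0\in\frp$ satisfies $[i\beta_0,\fru'']=0$ (this is the content of their preliminary lemmas). Your approach is lighter and suffices for the statement as written; the paper's buys the stronger conclusion that the shifted $\mu_{\frp'}$ still arises from an honest momentum map, which keeps the entire framework (Kempf--Ness theory, the cited results from \cite{BilZed}) literally applicable rather than applicable after checking that only the functional properties of $\mu_\frp$ are used.

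There is one point you pass over too quickly: you assert that $\G'=\K\exp(\frp')$ is a compatible closed subgroup of $\U^\C$ once you have verified the bracket relations for $\frg'=\frk\oplus\frp'$. Knowing that $\frg'$ is a subalgebra gives you an analytic subgroup, but compatibility requires closedness in $\U^\C$, and you have not shown that $\K\exp(\frp')$ is a group at all, let alone closed. This gap can be filled: $\frg'$ is stable under the Cartan involution of $\U^\C$, and a standard result (see e.g.\ \cite{Kna}) says that a $\theta$-stable subalgebra integrates to a closed subgroup with the expected global Cartan decomposition; alternatively one can argue directly via the polar diffeomorphism $\K\times\frp\to\G$ that $\K\exp(\frp')$ is a closed submanifold of the same dimension as the analytic subgroup with Lie algebra $\frg'$, hence equal to it. You should make this step explicit.
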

For the sake of clarity, we first prove some lemmata which will be useful in the proof of the above proposition.

Let $\fru' \coloneqq \frk\oplus i\frp$. It is immediate to check that $\fru'$ is a subalgebra of $\fru$.
\begin{lemma}\label{LA1}
Let $\beta_0\in\frp$ be a $\K$-fixed point. Then, $[\beta_0,\frg]=0$ and $[i\beta_0,\fru']=0$.
\end{lemma}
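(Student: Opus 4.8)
The plan is to split the claim $[\beta_0,\frg]=0$ into the two pieces $[\beta_0,\frk]=0$ and $[\beta_0,\frp]=0$ dictated by the decomposition $\frg=\frk\oplus\frp$, and then to derive the complexified statement $[i\beta_0,\fru']=0$ from complex bilinearity of the bracket on $\fru^\C$.

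First I would dispatch $[\beta_0,\frk]=0$. That $\beta_0$ is a $\K$-fixed point means $\Ad(k)\beta_0=\beta_0$ for every $k\in\K$. Writing $k=\exp(t\xi)$ with $\xi\in\frk$ and differentiating at $t=0$ yields $[\xi,\beta_0]=0$, so $[\frk,\beta_0]=0$ at once.

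The hard part is $[\beta_0,\frp]=0$, and the key device is the $\Ad(\K)$-invariant form $B$ restricted to $\frg$: it is positive definite on $\frp$, negative definite on $\frk$, with $B(\frk,\frp)=0$, and it is $\ad$-invariant, i.e.\ $B([x,y],z)=-B(y,[x,z])$ for all $x,y,z$. Fix $\eta\in\frp$ and set $\zeta\coloneqq[\beta_0,\eta]$, which lies in $\frk$ because $[\frp,\frp]\subset\frk$. Since $B$ is (negative) definite on $\frk$, it suffices to show $B(\zeta,\zeta)=0$. Using $\ad$-invariance,
\[
B(\zeta,\zeta)=B([\beta_0,\eta],\zeta)=-B(\eta,[\beta_0,\zeta]),
\]
and $[\beta_0,\zeta]=0$ since $\zeta\in\frk$ and we have already shown $[\beta_0,\frk]=0$. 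Hence $B(\zeta,\zeta)=0$, so $\zeta=0$. This gives $[\beta_0,\frp]=0$, and combined with the previous step we conclude $[\beta_0,\frg]=0$.

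Finally, for $[i\beta_0,\fru']=0$, recall that $\fru'=\frk\oplus i\frp$ and that the bracket on the complexification $\fru^\C=\fru\oplus i\fru$ is complex bilinear. As $\beta_0\in\frp\subset i\fru$, the element $i\beta_0$ lies in $\fru$, and for $\xi\in\frk$ and $\eta\in\frp$ one computes $[i\beta_0,\xi]=i[\beta_0,\xi]=0$ and $[i\beta_0,i\eta]=i^2[\beta_0,\eta]=-[\beta_0,\eta]=0$, both vanishing by $[\beta_0,\frg]=0$. Since every element of $\fru'$ is a sum of terms of these two forms, $[i\beta_0,\fru']=0$, completing the proof. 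The only genuinely delicate point is the middle step, where positivity/negativity of $B$ on the two summands is exactly what converts a bracket that \emph{a priori} only lies in $\frk$ into zero.
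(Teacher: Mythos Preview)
Your proof is correct and follows essentially the same approach as the paper. Both arguments first get $[\beta_0,\frk]=0$ from $\K$-fixedness, then use the $\ad$-invariance of $B$ together with its definiteness on $\frk$ to force $[\beta_0,\frp]=0$; the paper pairs $[\beta_0,\frp]$ against all of $\frk$ whereas you pair $\zeta=[\beta_0,\eta]$ against itself, which is a minor cosmetic difference, and the final step on $\fru'=\frk\oplus i\frp$ is the same in both.
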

\begin{proof}
First, observe that $[\beta_0,\frk]=0,$ since $\beta_0$ a $\K$-fixed point.
Moreover, $[\beta_0,\frp]\subset\frk$ and $B(\frk,[\beta_0,\frp])= -B([\beta_0,\frk],\frp)=0$, as $B$ is $\Ad(\U^\C)$-invariant.
Thus, $[\beta_0,\frp]=0$ and, consequently, $[\beta_0,\frg]=0$.
Finally, from the definition of $\fru'$, it follows that $[i\beta_0,\fru']=0$.
\end{proof}

Consider $\U'' \coloneqq \overline{\exp(\fru')}\subseteq\U$. $\U''$ is a compact subgroup of $\U$ and $\G$ is a compatible subgroup of $(\U'')^\C$, too.
Denote by $\fru''$ the Lie algebra of $\U''$. The momentum mapping for the $\U''$-action on $(Z,J,\omega)$ is given by $\pi_{\fru''}\circ\mu$, where $\pi_{\fru''}:\fru\rightarrow\fru''$
is the projection. Moreover, a result similar to Lemma \ref{LA1} also holds for $\fru''$.
\begin{lemma}
Let $\beta_0\in\frp$ be a $\K$-fixed point. Then, $[i\beta_0,\fru'']=0$.
\end{lemma}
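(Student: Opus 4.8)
The plan is to deduce the claim from Lemma \ref{LA1} by upgrading an infinitesimal commutation relation into a statement about the whole group $\U''$, and then passing back to its Lie algebra. First I would invoke Lemma \ref{LA1} to record that the $\K$-fixed vector $\beta_0$ satisfies $[i\beta_0,\fru']=0$; note that $i\beta_0\in\fru$, so it makes sense to speak of its centralizer inside $\U$. Since $\Ad(\exp X)\,i\beta_0=\exp(\mathrm{ad}_X)\,i\beta_0$ and $\mathrm{ad}_X(i\beta_0)=[X,i\beta_0]=-[i\beta_0,X]=0$ for every $X\in\fru'$, each $\exp X$ fixes $i\beta_0$ under the adjoint action. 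Hence $\exp(\fru')$ is contained in the stabilizer $\U_{i\beta_0}\coloneqq\{u\in\U\st\Ad(u)\,i\beta_0=i\beta_0\}$.

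The key point is that $\U_{i\beta_0}$ is a \emph{closed} subgroup of $\U$, being the stabilizer of a single vector under the continuous adjoint action. Because $\U''=\overline{\exp(\fru')}$ is by definition the closure of $\exp(\fru')$, and the closure of a subset of a closed set stays inside that set, I obtain $\U''\subseteq\U_{i\beta_0}$. Taking Lie algebras then gives $\fru''=\mathrm{Lie}(\U'')\subseteq\mathrm{Lie}(\U_{i\beta_0})=\{X\in\fru\st[X,i\beta_0]=0\}$, which is exactly the assertion $[i\beta_0,\fru'']=0$.

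The step I expect to be the only real (and minor) obstacle is precisely this passage from $\fru'$ to $\fru''$: one cannot argue purely algebraically on a generating set, because the Lie algebra $\fru''$ of the closure $\U''$ may strictly contain the subalgebra $\fru'$, the closure operation being able to enlarge the Lie algebra. Routing the argument through the closed stabilizer $\U_{i\beta_0}$ circumvents this, since containment of $\exp(\fru')$ in a closed group automatically forces the closure, and hence its Lie algebra, into that group. The remaining ingredient is the standard identification of the Lie algebra of an isotropy subgroup with the centralizer of the fixed vector, and this completes the proof.
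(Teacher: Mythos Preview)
Your proof is correct and follows essentially the same route as the paper's: both arguments pass from the infinitesimal identity $[i\beta_0,\fru']=0$ (Lemma~\ref{LA1}) to a group-level commutation, use the closure to reach all of $\U''$, and then return to the Lie algebra. The only cosmetic difference is that the paper takes an explicit sequence $\exp(\xi_n)\to s$ to show $\exp(it\beta_0)$ commutes with every $s\in\U''$, whereas you package the same step as ``$\exp(\fru')$ lies in the closed stabilizer $\U_{i\beta_0}$, hence so does its closure''; these are equivalent formulations of one argument.
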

\begin{proof}
Let $s\in\U''$ and let $\{\xi_n\}$ be a sequence in $\fru'$ such that $\exp(\xi_n)\rightarrow s.$ By Lemma \ref{LA1}, we have that
$\exp(it\beta_0)\exp(\xi_n)=\exp(\xi_n)\exp(it\beta_0)$, for every $t\in\R$. Therefore, $\exp(it\beta_0)\,s=s\,\exp(it\beta_0)$, that is, $\exp(it\beta_0)\in\mathcal{Z}(\U'')$.
\end{proof}

In the light of the previous observations, up to replace $\U$ with $\U''$, we can assume that $\G=\K\exp(\frp)$ is a compatible subgroup of $\U^\C$ with Lie algebra
$\frg=\frk\oplus\frp$, and that for every $\K$-fixed point $\beta_0\in\frp$ we have $[\beta_0,\frg]=0$ and $[i\beta_0,\fru]=0$.

Let us focus on the convex hull $E \coloneqq E(\mu_\frp)$ of $\mu_\frp(M)$ in $\frp$. $E$ is a $\K$-invariant convex body.
Let $\Aff(E)$ denote the affine hull of $E$. Then, $\Aff(E)=\beta_0+\frp'$, where $\frp'\subseteq\frp$ is a linear subspace.
Pick $\beta_0\in E$ such that $\|\beta_0\|=\min_{E}\|\beta\|$.
Observe that such $\beta_0$ is fixed by the $\K$-action. Therefore, $\frp'$ is $\K$-invariant.
Hence, up to shift $\mu$ by $-i\beta_0$, we may assume that $E\subseteq\frp'$ and that the interior of $E$ in $\frp'$ is nonempty.
Summarizing, we have proved the following
\begin{lemma}\label{LA3}
Up to shift the momentum mapping $\mu$, there exists a $\K$-invariant subspace $\frp'\subseteq\frp$ such that $E(\mu_\frp)$ is contained in $\frp'$ and its interior in $\frp'$ is nonempty.
\end{lemma}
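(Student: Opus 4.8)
The plan is to take $\frp'$ to be the affine hull of the convex hull $E\coloneqq E(\mu_\frp)$, after first translating $\mu_\frp$ so that this hull passes through the origin; the translation is achieved by subtracting the minimal-norm point of $E$.

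First I would record that $E$ is a nonempty compact convex subset of $\frp$ which is $\K$-invariant. Compactness and convexity are immediate, since $M$ is compact, $\mu_\frp$ is continuous, and $E$ is by definition the convex hull of the compact set $\mu_\frp(M)$ (cf.\ Lemma \ref{imgradconvhull}). The $\K$-invariance follows from the $\K$-equivariance of $\mu_\frp$ together with the linearity of the $\K$-action on $\frp$ through $\Ad$: the set $\mu_\frp(M)$ is $\K$-invariant, and forming convex hulls commutes with linear maps.

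Next I would single out the minimal-norm point. As $B|_{\frp\times\frp}$ is positive definite and $E$ is a nonempty compact convex set, there is a unique $\beta_0\in E$ attaining $\min_{\beta\in E}\|\beta\|$. The group $\K$ acts on $(\frp,B|_{\frp\times\frp})$ by isometries, because $\langle\cdot,\cdot\rangle=B|_{i\fru\times i\fru}$ is $\Ad(\K)$-invariant; hence for each $k\in\K$ the point $k\cdot\beta_0$ also minimizes the norm on the $\K$-invariant set $E$, and uniqueness forces $k\cdot\beta_0=\beta_0$. Thus $\beta_0$ is a $\K$-fixed point, so by the reduction carried out just before the statement (cf.\ Lemma \ref{LA1}) the element $i\beta_0$ is central in $\fru$. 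Consequently $\mu-i\beta_0$ is again a momentum mapping, and passing to it translates $\mu_\frp$, hence $E$, by $-\beta_0$. After this shift the origin is the minimal-norm point of $E$; in particular $0\in E$.

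Finally I would put $\frp'\coloneqq\Aff(E)$. Since $0\in E\subseteq\Aff(E)$, the affine hull contains the origin and is therefore a linear subspace of $\frp$; it is $\K$-invariant because $E$ is and $\K$ acts linearly. The interior of $E$ in $\frp'=\Aff(E)$ coincides with the relative interior of $E$, which is nonempty for any nonempty convex set, yielding the last claim. I expect the only delicate point to be the legitimacy of the shift: translating a momentum map by a constant automatically preserves the defining identity $d\mu^\xi=\iota_{\xi_Z}\omega$, but it preserves $\U$-equivariance only when the constant is $\Ad(\U)$-invariant, i.e.\ central; this is precisely why the $\K$-fixedness of $\beta_0$ and the preceding centrality reduction are invoked.
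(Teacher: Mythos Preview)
Your argument is correct and follows essentially the same route as the paper: both take $\frp'$ to be the direction of the affine hull of $E$, use the minimal-norm point $\beta_0\in E$ as the $\K$-fixed translation vector, and invoke the preceding reduction (Lemma \ref{LA1} and the passage to $\U''$) to justify that $\mu-i\beta_0$ is again a momentum map. Your write-up is in fact more explicit on the auxiliary points (uniqueness of $\beta_0$, $\K$-invariance of $E$, nonemptiness of the relative interior, and the equivariance issue for the shifted $\mu$) than the paper's own summary.
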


\begin{proof}[Proof of Proposition \ref{propapdx}]  \
\begin{enumerate}[{\rm i)}]
\item Consider the subspace $\frp'$ of $\frp$ obtained in Lemma \ref{LA3}.
Since $\frp'$ is $\K$-invariant, $[\frp',\frp']$ is an ideal of $\frk$.
Let $\frh \coloneqq [\frp',\frp']\oplus\frp'$. The Lie algebra $\frg$ decomposes as $\frg=\frh\oplus\frh^\perp$,
where $\frh^\perp$ is the orthogonal complement of $\frh$ in $\frg$ with respect to $B$.  By \cite[Prop.~1.3]{BilGhiHei3}, $\frh$ and $\frh^\perp$ are compatible $\K$-invariant
commuting ideals of $\frg$. Set $\K_1 \coloneqq \exp([\frp',\frp'])$ and $\Hg=\K_1\exp(\frp')$. Then, the group $\G' \coloneqq \overline\Hg = \overline\K_1\exp(\frp')$
is a compatible subgroup of $\U^\C$
and the $\G'$-gradient map $\mu_{\frp'}:M\rightarrow\frp'$ associated with $\mu$ satisfies $\mu_{\frp'}(M)=\mu_\frp(M)$.
\item Let $\nu$ be a $\K$-invariant measure on $\frp'$ such that $\nu(E(\mu_{\frp'}))=1.$ Define $\theta\coloneqq\int_{E(\mu_{\frp'})}\beta d\nu(\beta)$.
$\theta$ is a $\K$-fixed point of $E(\mu_{\frp'})$. In particular, $[i\theta,\fru]=0$.
We claim that $\theta\in\Omega(\mu_{\frp'})$. Indeed, otherwise there would exist $\xi\in\frp'$ such that
$\langle\theta,\xi\rangle = c$, while $\langle\beta,\xi\rangle < c$ for every $\beta\in\Omega(\mu_{\frp'})$. From this follows that
\[
\langle\theta,\xi\rangle = \int_{E(\mu_{\frp'})}\langle \beta,\xi\rangle d\nu(\beta) =  \int_{\Omega(\mu_{\frp'})}\langle \beta,\xi\rangle d\nu(\beta)<c,
\]
which is a contradiction. Therefore, up to shift $\mu$ by $-i\theta$, we have that $0\in\Omega(\mu_{\frp'})$.
\end{enumerate}
\end{proof}

\bigskip

\noindent  {\bf Acknowledgments.} This work was done when A.R.~was a postdoctoral fellow at the Department of Mathematics and Computer Science in Parma.
He would like to thank the department for hospitality.



\end{document}